\newcommand{\id}{\mathrm{id}}
\newcommand{\fK}{\Bbbk}
\newcommand{\clcm}[1]{ {}^{#1}\mathcal{M}}
\newcommand{\ot}{\otimes}
\theoremstyle{plain}
\newtheorem{thm}{Theorem}[section]
\newtheorem*{thm*}{Theorem}
\newtheorem*{classification*}{Classification theorem}
\newtheorem{lem}[thm]{Lemma}
\newtheorem{cor}[thm]{Corollary}
\newtheorem*{notation*}{Notation}
\newtheorem{prop}[thm]{Proposition}
\newtheorem{defi}[thm]{Definition}
\theoremstyle{remark}
\newtheorem*{rema*}{Remark}
\newtheorem{exa}[thm]{Example}
\numberwithin{equation}{section}
\begin{document}

\title[Decomposing pointed braided Hopf algebras]{A decomposition Theorem for\\ pointed braided Hopf algebras} 
\author{Istvan Heckenberger}
\address{Fachbereich Mathematik und Informatik,
Philipps-Universit\"at Marburg,
Hans-Meerwein-Str.~6, 35032 Marburg, Germany}
\email{heckenberger@mathematik.uni-marburg.de}

\author{Katharina Sch\"afer}
\address{Fachbereich Mathematik und Informatik,
Philipps-Universit\"at Marburg,
Hans-Meerwein-Str.~6, 35032 Marburg, Germany}
\email{schaef7a@mathematik.uni-marburg.de}

\begin{abstract}
  A known fundamental Theorem for braided pointed Hopf algebras states that for each coideal subalgebra, that fulfils a few properties, there is an associated quotient coalgebra right module such that the braided Hopf algebra can be decomposed into a tensor product of these two. Often one considers braided Hopf algebras in a Yetter-Drinfeld category of an ordinary Hopf algebra. In this case the braided Hopf algebra is in particular a comodule, as well as many interesting coideal subalgebras. We extend the mentioned Theorem by proving that the decomposition is compatible with this comodule structure if the underlying ordinary Hopf algebra is cosemisimple. 
\end{abstract}

\keywords{left coideal subalgebra, coalgebra right module, half-braiding, Yetter-Drinfeld module, cosemisimple Hopf algebra}

\subjclass{16W30}

\maketitle

\section*{Introduction}

The initial motivation for the discussion in the present paper is twofold. First, in the last years several papers (e.g. \cite{MR3552907}, \cite{MR2179722}, \cite{MR2415067}, \cite{MR3096611}, \cite{MR3413681}) appeared in which
rather general types of one-sided coideal subalgebras of some pointed braided Hopf algebras are studied from different perspectives. Especially the freeness property of Hopf algebras over such coideal subalgebras raises several questions in these contexts.
In a different direction, a characterization of one-sided coideal subalgebras in (braided) Hopf algebra triples is presented in \cite[\S.\,12.4]{MR4164719}, and is used later on essentially to establish the reflection theory of Nichols algebras.

In \cite{MR3552907}, subalgebras of Fomin-Kirillov algebras generated by homogeneous elements with respect to the grading by the symmetric group are studied.
These subalgebras are by construction left coideal subalgebras of the Fomin-Kirillov algebra and are graded by the symmetric group.

Kharchenko deals in \cite{MR2179722} with the free algebra generated by a braided vector space. (If the braiding is rigid, by a result of Takeuchi one may assume that this object is a Yetter-Drinfeld module over a Hopf algebra.) By defining the elements of the generating subspace to be primitive, the free algebra becomes a pointed braided Hopf algebra. Kharchenko proves in this context freeness \emph{of} (not \emph{over}!) "right categorical right coideal subalgebras". After interchanging left and right in his setting, his notion is very close to our
left coideal subalgebras in the category of comodules, see below.

By reading the above mentioned and some similar papers we concluded that there is appreciable interest in the study of (half-categorical) one-sided  coideal subalgebras of pointed braided Hopf algebras.

The second motivation for our work is our attempt to strengthen a very powerful Theorem in \cite[Theorem~6.3.2]{MR4164719} on the decomposition of a pointed braided Hopf algebra into the tensor product of a left coideal subalgebra and a coalgebra right module. For the special case of a connected braided Hopf algebra in ${^{H}_{H}}\mathcal{YD}$ and for Yetter-Drinfeld module left coideal subalgebras $K$, a similar theorem was also proven in \cite[Proposition~3.6]{MR3133699}. We identified the need for a generalization when trying to apply the theorem to classify Nichols algebras over non-abelian groups satisfying a certain property regarding their left coideal subalgebras. For this application it would be essential to choose the above mentioned decomposition such that it is compatible with an additional coaction of a Hopf algebra. We prove in Theorem~\ref{decomp} the following version of the decomposition Theorem: 

\begin{quote}
Let $H$ be a cosemisimple Hopf algebra. Let $A$ be a pointed braided Hopf algebra in ${^{H}_{H}}\mathcal{YD}$ such that $\fK g$ is an $H$-subcomodule and an $H$-submodule for all group-like elements $g\in A$. Let $K$ be a left coideal subalgebra of $A$ in ${^H}\mathcal{M}$ such that $g^{-1}\in K$ for all group-like elements $g\in A\cap K$.
Then there is a left $K$-linear right $A/K^+A$-colinear and left $H$-colinear isomorphism $K\ot A/K^+A\rightarrow A$. 
\end{quote}
Moreover, we also formulate a corollary with a graded version of this claim.
For the proof of Theorem~\ref{decomp}, we follow the steps in Chapter~6.3 in \cite{MR4164719}, but we have to adapt and add arguments in order to get the $H$-colinearity. Hopf modules in ${^A}({^H}\mathcal{M})_K$ play a crucial role here, which we introduce in Section~\ref{sec:Hopfmodules}.
An essential argument in the proof and an interesting statement in itself is that such Hopf modules are free as $K$-modules in the category of left $H$-comodules, what we prove in
Proposition~\ref{pro:freeness}.

\newpage

\section{Preliminaries and examples}

We denote the counit of a coalgebra by $\varepsilon,$ the comultiplication by $\Delta$ and we use Sweedler's notation. If $C$ is a coalgebra and $M$ a left (resp. right) $C$-comodule, we indicate the comodule structure map by ${^C}\delta_M$ (resp.  $\delta^C_M$) and extend Sweedler's notation writing $${^C}\delta_M(m)=m_{(-1)}\ot m_{(0)} \ \ \mathrm{(resp.} \ \ \delta^C_M(m)=m_{(0)}\ot m_{(1)})$$ 
for all $m\in M$. For the set of group-like elements of $C$ we write $G(C)$. If $A$ is an algebra and $N$ a left (resp. right) $A$-module, we denote the module structure map by ${_A}\lambda_N$ (resp. $\lambda_{N,A}$). Sometimes we write $a\cdot n$ (resp. $n\cdot a$) for the action of an element $a\in A$ on $n\in N$.

Let $H$ be a Hopf algebra with bijective antipode over a field $\fK$. Then the category ${_{H}^{H}}\mathcal{YD}$ of Yetter-Drinfeld modules over $H$ is braided monoidal with braiding 
$$c:V\ot W\rightarrow W\ot V,\ v\ot w\mapsto v_{(-1)}\cdot w\ot v_{(0)}.$$    
Let $A$ be a braided Hopf algebra in the category ${_{H}^{H}}\mathcal{YD}$. For details about Hopf algebras in the category of Yetter-Drinfeld modules we refer to Chapters 3 and 4 in \cite{MR4164719}. Let $K$ be a left coideal subalgebra of $A$ in the category ${^H}\mathcal{M},$ that is a left $H$-subcomodule, a subalgebra of $A$ and $\Delta(K)\subseteq A\ot K$.  Let $K^+=\ker(\varepsilon: K\rightarrow \fK)$ and $\overline{A}=A/K^+A$. Then $\overline{A}$ is a quotient coalgebra of $A$ (see Lemma 6.3.3, \cite{MR4164719}) in $\clcm {H}$, since $K$ is a left $H$-subcomodule of $A$ and $\varepsilon$ is a morphism in $\clcm {H}$. The braided Hopf algebra $A$ is a right $\overline{A}$-comodule by $a\mapsto a^{(1)}\ot \overline{a^{(2)}}$. 

As examples of the mentioned objects one can keep the following more or less explicit ones in mind.

\begin{exa}\label{ex:tensoralg}
Let $H=\fK G$ be the group Hopf algebra for a group $G$. Take a Yetter-Drinfeld module $V$ in ${^{H}_{H}}\mathcal{YD}$. Using the isomorphism between $G$-graded vector spaces and $\fK G$-comodules, Yetter-Drinfeld modules $V$ in ${^{H}_{H}}\mathcal{YD}$ are $G$-graded vector spaces $V=\bigoplus_{g\in G}V_g$ with a left $\fK G$-module structure, such that $g\cdot V_h\subseteq V_{ghg^{-1}}$ for all $g,h\in G$. The implied left $\fK G$-comodule structure is given by ${^{\fK G}}\delta_V(v)=g\ot v$ for all $v\in V_g$, $g\in G$. 
Then the tensor algebra $A=T(V)$ is a braided Hopf algebra in ${^{H}_{H}}\mathcal{YD}$ with
$$ \Delta(v)=1\ot v+v\ot 1,\quad
\varepsilon(v)=0,\quad S(v)=-v $$
for all $v\in V$. The $\fK G$-action and $\fK G$-coaction on $A$ are defined to be diagonal, and trivial on $1_A$. If $U$ is a $\fK G$-subcomodule of $V$ or equivalently a subspace generated by $G$-homogeneous elements, then the subalgebra $K=\left\langle U\right\rangle$ generated by $U$ is a left coideal subalgebra of $A$ in $\clcm {H}$. 
\end{exa}

\begin{exa}\label{ex:nicholsalg}
Let $V\in {^{H}_{H}}\mathcal{YD}$. The Nichols algebra of $V$, defined as the quotient $$\mathcal{B}(V)=T(V)/I(V),$$ where $I(V)$ is the largest coideal of $T(V)$ in degree $\geq 2$ is also a braided Hopf algebra in ${^{H}_{H}}\mathcal{YD}$. For detailed informations about Nichols algebras see e.g. Sections~1.6 and 7.1 in \cite{MR4164719}. More generally, one can take any braided Hopf algebra quotient $A$ of $T(V)$ by a coideal two-sided ideal in $\clcm H$. Then subalgebras of $A$ generated by $H$-subcomodules, which are also left $A$-subcomodules via $\Delta$, are left coideal subalgebras of $A$ in $\clcm H$.
\end{exa}

\begin{exa}\label{ex:fominkirillov}
Here we describe a family of quite explicit well-known examples of braided Hopf algebras. Let $n\in \mathbb{N}$ and let $\mathbb{S}_n$ be the symmetric group on $n$ letters. Let $H=\fK \mathbb{S}_n$ be the group Hopf algebra. The set
$$T=\left\{(i\,j)\,|\,1\le i<j\le n\right\}$$
of transpositions in $\mathbb{S}_n$ is a conjugacy class. Let $V$ be the vector space with basis $v_{ij}$, $1\le i<j\le n$, and let $v_{ji}=-v_{ij}$ for all $i\ne j$. Then $V$ is a Yetter-Drinfeld module in ${^{H}_{H}}\mathcal{YD}$, where the left $H$-comodule structure and the left $H$-module structure are given by 
\begin{align*}
{^H}\delta_V(v_{ij})&=(i\,j)\ot v_{ij},\\
{_H}\lambda_V(\sigma\ot v_{ij})&=v_{\sigma(i)\sigma(j)}  
\end{align*}
for all $i\ne j$, $\sigma\in \mathbb{S}_n$. Moreover, $T(V)$ is a braided Hopf algebra by Example~\ref{ex:tensoralg}. The elements
\begin{align*}
  &v_{ij}^2, && 1\le i<j\le n,\\ &v_{ij}v_{jk}+v_{jk}v_{ki}+v_{ki}v_{ij},&& \#\{i,j,k\}=3,\\
  &v_{ij}v_{kl}-v_{kl}v_{ij}, && \#\{i,j,k,l\}=4,
\end{align*}
are primitive in $T(V)$ and the ideal $I$ of $T(V)$ they generate is also a coideal.
The quotient $T(V)/I$ is a braided Hopf algebra called \textbf{Fomin-Kirillov algebra}. The (left coideal) subalgebras generated by $v_{ij}$, $(i\,j)\in G$, where $G$ is any subset of $T$, have been studied in \cite{MR3552907}.
\end{exa}

\section{Freeness of Hopf modules in $\clcm H$}\label{sec:Hopfmodules}
For any object $V$ in the category $({^H}\mathcal{M})_K$ of right $K$-modules left $H$-comodules with left $H$-colinear structure map $\lambda_{V,K},$ the tensor product $A\ot V$ is a right $K$-module in $\clcm {H}$ by the \textbf{(half) braided diagonal action} 
\begin{align*}
A\ot V\ot K &\xrightarrow{\mathrm{id}_{A\ot V}\ot \Delta_K} A\ot V\ot A\ot K \\
            &\xrightarrow{\mathrm{id}_{A}\ot c\ot \mathrm{id}_K} A\ot A\ot V\ot K \\
						&\xrightarrow{\mu\ot \lambda_{V,K}} A\ot V,
\end{align*}
where $\Delta_K$ is the comultiplication of $A$ restricted to $K,$  $\mu$ denotes the multiplication of $A$ and $c: V\ot A\rightarrow A\ot V$ is the linear map defined by 
$$c(v\ot a)= v_{(-1)}\cdot a\ot v_{(0)}$$
for all $v\in V$, $a\in A$. 
This diagonal action can in fact be formulated and used categorically, see e.g. \cite{MR4209967}.

\begin{defi}
 Let $V$ be a left $H$-comodule. Then $V$ is called a \textbf{(left-right) Hopf module in} ${^H}\mathcal{M}$ if $V$ is a left $A$-comodule and a right $K$-module in the category of left $H$-comodules such that the $A$-comodule structure map ${^A}\delta_V: V\rightarrow A\ot V$ is right $K$-linear, where $A\ot V$ is a right $K$-module by the (half) braided diagonal action. 
\end{defi}

In the definition, the $K$-linearity of $^A\delta_V$ is equivalent to the  left $A$-colinearity of the $K$-module structure map $V\ot K\rightarrow V$,
where  $V\ot K$ is a left $A$-comodule by (half) braided diagonal action, that is
$${^A}\delta_{V\ot K}=(\mu_A\ot \id_{V\ot K})(\id_A\ot c\ot \id_A)({^A}\delta_V\ot \Delta_K).$$
We write ${^A}({^H}\mathcal{M})_K$ for the category of left-right Hopf modules in $\clcm H$, where morphisms are left $H$-colinear, left $A$-colinear and right $K$-linear. We will omit the specification ``left-right'' for Hopf modules, since we are not dealing with other Hopf modules here. Note that $A$ is an object in ${^A}({^H}\mathcal{M})_K,$ where the left $A$-comodule structure is given by comultiplication and the right $K$-module structure by restriction of the multiplication in $A$. Moreover, each left coideal subalgebra $$K\subseteq K'\subseteq A$$
in ${^H}\mathcal{M}$ is a subobject of $A$ in ${^A}({^H}\mathcal{M})_K$.

\begin{lem} \label{le:UKhopfmodule}
Let $U$ be a left $A$-comodule in the category $\clcm H$. Then $U\ot K$ is a Hopf module in ${^A}({^H}\mathcal{M})_K$, where the left $H$-coaction is diagonal, the left $A$-coaction is (half) braided diagonal, and the right $K$-action is multiplication on the second tensor factor.   
\end{lem}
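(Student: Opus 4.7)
The plan is to verify each piece of structure on $U\ot K$ separately and then check the Hopf module compatibility. Conceptually, the lemma expresses that $U\ot K$ is the free right $K$-module on $U$ in the category of left $A$-comodules inside $\clcm{H}$, but we proceed by direct verification.

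First, the right $K$-action on the second tensor factor is trivially associative and unital, and its $H$-colinearity is inherited from that of $\mu|_{K\ot K}\colon K\ot K\to K$, which is part of the datum of $K$ being an algebra in $\clcm{H}$. The (half) braided diagonal left $A$-coaction ${^A}\delta_{U\ot K}$ is defined by the displayed formula just above the lemma, applied with $V=U$. Its coassociativity, counit axiom and $H$-colinearity all follow from a direct diagram chase using: coassociativity of ${^A}\delta_U$ and of $\Delta$ on $A$; naturality of the half-braiding $c\colon -\ot A\to A\ot -$; $H$-colinearity of $\mu$, $\Delta_K$ and $c$; and the fact that $\Delta\colon A\to A\ot A$ is an algebra morphism in ${^{H}_{H}}\mathcal{YD}$.

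The decisive compatibility is the $A$-colinearity of the right $K$-action, equivalently the $K$-linearity of ${^A}\delta_{U\ot K}$. It is enough to check, for $u\in U$ and $k,k'\in K$, that
$${^A}\delta_{U\ot K}\bigl((u\ot k)\cdot k'\bigr)={^A}\delta_{U\ot K}(u\ot k)\cdot k',$$
where on the right-hand side $A\ot (U\ot K)$ carries the half-braided diagonal $K$-action recalled at the start of the section. Expanding the left-hand side with $\Delta(kk')$ and the right-hand side with $\Delta(k)\Delta(k')$ computed in $A\ot K$ using the braided product (so that the $H$-coaction feeding into $c$ sits on the middle factor), the two sides agree because $\Delta\colon A\to A\ot A$ is an algebra morphism for the braided product on $A\ot A$, and $\Delta(K)\subseteq A\ot K$ ensures everything stays inside $A\ot K$. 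The main obstacle is purely bookkeeping: one has to distinguish the $A$-coaction from the $H$-coaction in Sweedler notation and push the half-braiding through the formula without confusing which tensor factors are being swapped. Once the notation is fixed, every step reduces to a built-in compatibility.
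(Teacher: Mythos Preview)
Your proposal is correct and follows essentially the same route as the paper: reduce the Hopf module compatibility to the right $K$-linearity of ${}^A\delta_{U\ot K}$ and deduce this from the fact that $\Delta_A$ is an algebra morphism for the braided product (together with $\Delta(K)\subseteq A\ot K$). The only difference is that the paper carries out the Sweedler-notation computation explicitly, whereas you give the conceptual outline; the mathematical content is the same.
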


\begin{proof}
Since $\clcm {H}$ is a monoidal category with diagonal left $H$-coaction, the tensor product $U\ot K$ is a left $H$-comodule. Multiplication on the second tensor factor with elements of $K$ is a well-defined left $H$-colinear right module structure map on $U\ot K$, since $K$ is a subalgebra and multiplication in $A$ is $H$-colinear.
Moreover, of course the (half) braided diagonal coaction of $A$ is $H$-colinear as a composition of left $H$-colinear maps. By definition, to prove the right $K$-linearity of ${}^A\delta_{U\ot K}$ we have to verify that
\begin{align} \label{eq:Klinear}
{^A}\delta_{U\ot K}\lambda_{U\ot K,K}=\lambda_{A\ot (U\ot K),K}({^A}\delta_{U\ot K}\ot\id_K),
\end{align}
where $\lambda_{A\ot (U\ot K),K}$ is the (half) braided diagonal $K$-action.
For the proof we use the notation
$$ {}^A\delta_V(v)=v^{(-1)}\ot v^{(0)}$$
for all left $A$-comodules $V$ and all $v\in V$.
Then Equation~\eqref{eq:Klinear} holds since
\begin{align*}
  &\lambda_{A\ot (U\ot K),K}({^A}\delta_{U\ot K}\ot\id_K)(u\ot k\ot l)\\
  &=\lambda_{A\ot (U\ot K),K}
  \big((u^{(-1)}(u^{(0)}{}_{(-1)}\cdot k^{(1)}) \ot u^{(0)}{}_{(0)} \ot k^{(2)})
  \ot l\big)\\
  &=u^{(-1)}
  (u^{(0)}{}_{(-2)}\cdot k^{(1)})(u^{(0)}{}_{(-1)}k^{(2)}{}_{(-1)}\cdot l^{(1)})\ot (u^{(0)}{}_{(0)}\ot k^{(2)}{}_{(0)}l^{(2)})\\
  &=u^{(-1)}(u^{(0)}{}_{(-1)}\cdot (kl)^{(1)})\ot u^{(0)}{}_{(0)}\ot (kl)^{(2)}\\
  &={^A}\delta_{U\ot K}(u\ot kl)\\
  &={^A}\delta_{U\ot K}\lambda_{U\ot K,K}(u\ot k\ot l)
\end{align*}
for all $u\in U$, $k,l\in K$. In these equations we used the definitions, the $H$-comodule axiom (in the second equation) and that $A$ is an $H$-comodule algebra  and  $\Delta_A$ is an algebra morphism in ${^{H}_{H}}\mathcal{YD}$ (in the third equation). 
\end{proof}

For the proof of the decomposition Theorem~\ref{decomp} it is an essential step to prove that under some extra assumptions on the objects $H,A$ and $K$, Hopf modules in ${^A}({^H}\mathcal{M})_K$ are free as $K$-modules in the category of left $H$-comodules in the sense of the following definition. 

\begin{defi}\label{free}
Let $C$ be a bialgebra, $B$ an algebra in ${^C}\mathcal{M}$ and $M$ a right $B$-module in ${^C}\mathcal{M}$. Then we say that $M$ is \textbf{right $B$-free in ${^C}\mathcal{M}$} if there is a $C$-subcomodule $N\subseteq M$ such that the restricted $B$-module structure map $N\ot B\rightarrow M$ is bijective.
\end{defi}

Thus, right $B$-free modules in the category of $C$-comodules are free right $B$-modules generated by a $C$-subcomodule. For the trivial bialgebra $C=\fK 1_C$ this is the common definition of free right $B$-modules. Free right $B$-modules (in the category of vector spaces) are known to be projective objects. If we assume that $C$ is  cosemisimple, then $B$-free modules in ${^C}\mathcal{M}$ in the sense of our definition are projective objects in $({^C}\mathcal{M})_B$ by Lemma~\ref{proj} below.
Note additionally, that direct summands of projective objects are projective by a standard argument.

\begin{lem}  \label{proj}
Let $C$ be a cosemisimple bialgebra, $B$ an algebra in ${^C}\mathcal{M}$ and $P$ a $B$-free right $B$-module in ${^C}\mathcal{M}$. Then $P$ is a projective object in $({^C}\mathcal{M})_B$.
\end{lem}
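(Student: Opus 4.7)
The plan is to reduce projectivity in $({^C}\mathcal{M})_B$ to the existence of colinear sections in ${^C}\mathcal{M}$, using the $B$-free structure of $P$ to upgrade a colinear lift on the generating subcomodule to a $B$-linear, $C$-colinear lift on all of $P$.

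First, I would fix a $C$-subcomodule $N\subseteq P$ such that the right $B$-action restricts to an isomorphism $\varphi\colon N\ot B\to P$ in $({^C}\mathcal{M})_B$. Given a surjection $f\colon M\twoheadrightarrow M'$ in $({^C}\mathcal{M})_B$ and a morphism $g\colon P\to M'$ in $({^C}\mathcal{M})_B$, I want to construct a $B$-linear, $C$-colinear lift $\tilde g\colon P\to M$ with $f\tilde g=g$.

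The key input is cosemisimplicity of $C$: the category ${^C}\mathcal{M}$ is semisimple, so every surjection of left $C$-comodules splits. Applying this to $f$, viewed merely as a morphism in ${^C}\mathcal{M}$, yields a $C$-colinear section $s\colon M'\to M$ with $fs=\id_{M'}$ (I would not insist that $s$ be $B$-linear). Restrict $g$ along the inclusion $N\hookrightarrow N\ot B\xrightarrow{\varphi}P$ and post-compose with $s$ to obtain a $C$-colinear map $g_N:=s\circ g|_N\colon N\to M$. Then define
\[
\tilde g\colon P\xrightarrow{\varphi^{-1}} N\ot B\xrightarrow{g_N\ot \id_B} M\ot B\xrightarrow{\lambda_{M,B}} M,
\]
i.e.\ $\tilde g(\varphi(n\ot b))=g_N(n)\cdot b$. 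By construction $\tilde g$ is $B$-linear, since the $B$-action on $P$ is transported to the right tensor factor through $\varphi$ and $\lambda_{M,B}$ is $B$-linear in the second argument. It is $C$-colinear because each of the three maps $\varphi^{-1}$, $g_N\ot \id_B$ and $\lambda_{M,B}$ is $C$-colinear: the last one because $M\in({^C}\mathcal{M})_B$ means precisely that the action map is a morphism of left $C$-comodules. Finally, $f\tilde g(\varphi(n\ot b))=f(g_N(n))\cdot b=g(n)\cdot b=g(\varphi(n\ot b))$, using $fs=\id$ on the first equality and $B$-linearity of $g$ on the last, so $f\tilde g=g$.

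The one subtle point — and the only place where any real hypothesis is used — is the production of the $C$-colinear section $s$, which relies on cosemisimplicity of $C$; everything else is a formal manipulation of the free presentation $P\cong N\ot B$. I do not foresee a genuine obstacle: the proof essentially says that a $B$-free module with $C$-colinear generators is a direct summand of a module induced from a projective (in fact arbitrary) object of ${^C}\mathcal{M}$, and projectivity transfers along such induction in the presence of a colinear splitting of every epimorphism.
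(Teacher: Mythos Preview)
Your proof is correct and follows essentially the same approach as the paper: both arguments use the free presentation $P\cong N\ot B$, invoke cosemisimplicity of $C$ to produce a $C$-colinear lift of $g|_N$ to $M$, and then extend $B$-linearly. The only cosmetic difference is that you take a global $C$-colinear section $s\colon M'\to M$ of the epimorphism, whereas the paper instead chooses a $C$-comodule complement of $\ker f$ inside the preimage of $g(N)$; these are equivalent uses of semisimplicity of ${^C}\mathcal{M}$.
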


\begin{proof}
Let $M,N$ be objects and $g: M\rightarrow N$ a surjective morphism in $({^C}\mathcal{M})_B$. Let $f: P\rightarrow N$ be a morphism in $({^C}\mathcal{M})_B$. Since $P$ is $B$-free in ${^C}\mathcal{M}$, $P$ is isomorphic to $Q\ot B$ for a $C$-subcomodule $Q$ of $P$. Since ${^C}\mathcal{M}$ is semisimple, there exists a $C$-subcomodule $M'\subseteq M$ such that
$$\big(\operatorname{Ker}(g)\cap g^{-1}(f(Q))\big)\oplus M'=g^{-1}(f(Q)).$$ 
Thus the restricted map $g:M'\rightarrow f(Q)$ is an isomorphism of $C$-comodules. Let $h: f(Q)\rightarrow M'$ be its inverse and $\varphi$ the $B$-linear extension of the $C$-colinear map 
$$hf: Q\rightarrow M'.$$ 
Then $\varphi:P\rightarrow M$ is left $C$-colinear, right $B$-linear and $f=g\varphi$.
\end{proof}

We are going to prove that Hopf modules in ${^A}({^H}\mathcal{M})_K$ are $K$-free in ${^H}\mathcal{M}$ in the sense of Definition \ref{free} under some extra assumptions on the Hopf algebra $H,$ the braided Hopf algebra $A\in {^{H}_{H}}\mathcal{YD}$ and the left coideal subalgebra $K\subseteq A$. A (braided) Hopf algebra is called pointed if it is pointed as a coalgebra, which means that all its simple subcoalgebras are one-dimensional.  

Recall that $A$ is a Hopf algebra in
${_{H}^{H}}\mathcal{YD}$.

\begin{lem} \label{le:onedimsub}
  Let $U\subset A$ be a one-dimensional subspace which is both an $H$-subcomodule and a left $A$-subcomodule of $A$. Then $U=\Bbbk g$ for some $g\in G(A)$ with ${}^H\delta(g)=1\ot g$.
\end{lem}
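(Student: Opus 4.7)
The plan is to first show that $U$ is spanned by a group-like element of $A$, using only the left $A$-subcomodule structure, and then separately use the $H$-colinearity of $\Delta_A$ to upgrade this to the stated coaction.

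First I would fix a nonzero $u\in U$, so $U=\Bbbk u$. Since $U$ is a left $A$-subcomodule we have $\Delta(u)\in A\ot U = A\ot \Bbbk u$, so $\Delta(u)=a\ot u$ for a unique $a\in A$. Coassociativity applied to $u$ gives $\Delta(a)\ot u = a\ot a\ot u$, hence $\Delta(a)=a\ot a$. The counit axiom on the first tensorand yields $\varepsilon(a)u=u$, so $\varepsilon(a)=1$, and therefore $a\in G(A)$. The counit axiom on the second tensorand yields $\varepsilon(u)a=u$; since $u\neq 0$ this forces $\varepsilon(u)\neq 0$ and $u=\varepsilon(u)a$. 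In particular $U=\Bbbk a$ with $a$ group-like.

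Next, since $U=\Bbbk a$ is an $H$-subcomodule, we can write ${}^H\delta(a)=h\ot a$ for some $h\in H$. The usual $H$-comodule axioms applied to $a$ (using $a\neq 0$) force $\Delta_H(h)=h\ot h$ and $\varepsilon_H(h)=1$, so $h\in G(H)$; in particular $h$ is invertible in $H$.

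The crucial step, which I expect to be the one requiring the Yetter--Drinfeld structure, is to show $h=1$. For this I would exploit that $\Delta_A\colon A\to A\ot A$ is $H$-colinear, with $A\ot A$ carrying the diagonal $H$-coaction. Applied to the group-like element $a$ this reads
\[
{}^H\delta_{A\ot A}(\Delta_A(a)) = (\id_H\ot \Delta_A)\,{}^H\delta_A(a).
\]
The left-hand side equals ${}^H\delta_{A\ot A}(a\ot a) = h^2\ot a\ot a$, while the right-hand side equals $h\ot a\ot a$. Comparing (and using $a\ot a\neq 0$) gives $h^2=h$; since $h$ is group-like and therefore invertible, $h=1$. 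This yields ${}^H\delta(a)=1\ot a$, as required.
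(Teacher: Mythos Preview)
Your proof is correct and follows essentially the same approach as the paper's: both first identify the group-like generator of $U$ from the left $A$-coaction and the counit axiom, and then use the $H$-colinearity of $\Delta_A$ together with $h\in G(H)$ to deduce $h^2=h$ and hence $h=1$. The only cosmetic difference is that the paper normalizes at the outset by choosing $g\in U$ with $\varepsilon(g)=1$, whereas you start with an arbitrary $u$ and recover $u=\varepsilon(u)a$; one small remark is that the step you flag as ``requiring the Yetter--Drinfeld structure'' in fact only uses that $\Delta_A$ is $H$-colinear, which already holds for any bialgebra in $\clcm H$.
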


\begin{proof}
  Since $\Delta(U)\subseteq A\ot U$, and since $U\ne 0$ and $(\id \ot \varepsilon)\Delta (x)=x$ for all $x\in A$, there exists $g\in U$ with $\varepsilon (g)=1$. Moreover, $\Delta(g)=g'\ot g$ for some $g'\in A$ because of $\dim U=1$ and $\Delta(U)\subseteq A\ot U$. Equation $(\id \ot \varepsilon)\Delta (g)=g$ implies that $g'=g$ and hence $g\in G(A)$.
  
By assumption, there exists $h\in H$ with
  $$ {}^H\delta_A(g)=h\ot g.$$
  Since $\Delta $ is a morphism in $\clcm H$, it follows that
  $$ h\ot g\ot g
  =h\ot \Delta(g)
  ={}^H\delta_{A\ot A}(\Delta(g))=h^2\ot g\ot g. $$
  Hence $h^2=h$. Since $h\in G(H)$ due to the comodule axiom for ${}^H\delta_A$, and since $H$ is a Hopf algebra, it follows that $h$ is invertible and hence $h=1$.
\end{proof}

\begin{lem}
\label{le:gsubcomod}
Assume that $A$ is pointed and that each group-like element of $A$ spans an $H$-subcomodule of $A$. Then
\begin{enumerate}
    \item $^H\delta(g)=1\ot g$ for all $g\in G(A)$,
    \item the multiplication of $A$ induces a group structure on $G(A)$, and
    \item
for each non-zero object $V\in {}^A(\clcm H)$ there exist $g\in G(A)$ and a subobject $U\ne 0$ of $V$ in $\clcm H$ with ${}^A\delta_V(u)=g\ot u$ for all $u\in U$.
\end{enumerate}
\end{lem}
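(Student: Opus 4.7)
The plan is to prove the three claims sequentially, each building on the previous.

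Part (1) follows immediately from Lemma~\ref{le:onedimsub} applied to $U = \fK g$: it is an $H$-subcomodule by hypothesis, and $\Delta(g) = g \otimes g \in A \otimes \fK g$ shows it is also a left $A$-subcomodule, so the lemma applies and gives ${^H}\delta(g) = 1 \otimes g$.

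For part (2), I first verify that $G(A)$ is closed under multiplication. Since $\Delta$ is an algebra morphism in ${^{H}_{H}}\mathcal{YD}$ and the multiplication on the braided tensor product $A \otimes A$ reads $(a \otimes b)(c \otimes d) = a(b_{(-1)} \cdot c) \otimes b_{(0)} d$, one computes $\Delta(g)\Delta(h) = g(g_{(-1)} \cdot h) \otimes g_{(0)} h$. By part~(1) the $H$-coaction on $g$ is trivial, so this reduces to $gh \otimes gh$, and hence $gh \in G(A)$. For invertibility, the braided antipode axiom yields $S(g)g = gS(g) = \varepsilon(g) \cdot 1 = 1$. Combining the braided anti-coalgebra identity $\Delta \circ S = (S \otimes S) \circ c \circ \Delta$ with part~(1), which trivializes the braiding on $g \otimes g$, gives $\Delta(S(g)) = S(g) \otimes S(g)$. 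So $S(g) \in G(A)$ is a two-sided inverse of $g$, and $G(A)$ is a group.

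For part (3), I exploit pointedness of $A$. Since the coradical is $A_0 = \fK G(A)$, any nonzero left $A$-comodule $V$ has a nonzero $0$-th coradical term $V_0 = \{v \in V : {^A}\delta_V(v) \in A_0 \otimes V\}$, and this socle decomposes as $V_0 = \bigoplus_{g \in G(A)} V_g$ with $V_g = \{v : {^A}\delta_V(v) = g \otimes v\}$. Choosing $g$ with $V_g \neq 0$ and setting $U := V_g$ yields the required $A$-coaction behaviour on $U$. The remaining task is to show that $U$ is a subobject in $\clcm H$, i.e., an $H$-subcomodule. I would do this by recognizing $V_g$ as the kernel of the $\fK$-linear map $\psi \colon v \mapsto {^A}\delta_V(v) - g \otimes v$. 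Since $V$ lives in ${^A}(\clcm H)$, the structure map ${^A}\delta_V$ is $H$-colinear, and the map $v \mapsto g \otimes v$ is $H$-colinear as well: under the diagonal coaction on $A \otimes V$, part~(1) gives ${^H}\delta_{A \otimes V}(g \otimes v) = g_{(-1)} v_{(-1)} \otimes g_{(0)} \otimes v_{(0)} = v_{(-1)} \otimes g \otimes v_{(0)}$. Hence $\psi$ is $H$-colinear and its kernel $V_g$ is an $H$-subcomodule.

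The main obstacle is this final verification in part~(3): one must reconcile the coradical decomposition of $V$ as an $A$-comodule with the ambient $H$-comodule structure, and this is precisely where part~(1) plays an essential role by trivializing the $H$-coaction on the group-like $g$.
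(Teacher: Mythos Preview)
Your argument is correct. Parts~(1) and~(2) match the paper's proof in substance; the only cosmetic difference in~(2) is that you show $S(g)\in G(A)$ via the braided anti-coalgebra identity for~$S$, whereas the paper deduces $\Delta(g^{-1})=g^{-1}\ot g^{-1}$ by expanding $\Delta(gg^{-1})=1\ot 1$ and cancelling~$g$ on each factor (using~(1) to trivialize the braided product). Both are valid and rest on the same input from~(1).

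Part~(3) is where you genuinely diverge. The paper picks a single eigenvector $0\ne v$ with ${}^A\delta_V(v)=g\ot v$, lets $U$ be the $H$-subcomodule \emph{generated} by $v$, and then checks directly from $H$-colinearity of ${}^A\delta_V$ and ${}^H\delta_A(g)=1\ot g$ that every element of this generated subcomodule is again a $g$-eigenvector. You instead take the full eigenspace $V_g=\{v:{}^A\delta_V(v)=g\ot v\}$ and recognize it as the kernel of the $H$-colinear map $\psi(v)={}^A\delta_V(v)-g\ot v$, so it is automatically an $H$-subcomodule. Both arguments hinge on exactly the same point, namely that $v\mapsto g\ot v$ is $H$-colinear thanks to~(1). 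Your route is slightly slicker and yields the largest possible~$U$, while the paper's route makes explicit that even the $H$-comodule generated by one eigenvector already works; for the purposes of the lemma either suffices.
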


\begin{proof}
  (1) follows directly from Lemma~\ref{le:onedimsub}.
  
  (2) Let $g,h\in G(A)$. Then
  $\Delta(gh)=\Delta(g)\Delta(h)=gh\ot gh$ because of (1).
  Moreover, $g$ is invertible with inverse $S(g)$ because of the Hopf algebra axiom. Finally, again by (1),
  $$ 1\ot 1=\Delta(gg^{-1})
  =\Delta(g)\Delta(g^{-1})
  =g\,(g^{-1})^{(1)}\ot g\,(g^{-1})^{(2)},
  $$
  which implies that $g^{-1}\in G(A)$.
  
  (3) Since $V\ne 0$ and $A$ is pointed, there exist $0\ne v\in V$ and $g\in G(A)$ with $^A\delta_V(v)=g\ot v$.
  Let
  $$ U=\left\{f(v_{(-1)})v_{(0)}\mid f\in H^*\right\}$$
  be the $H$-subcomodule of $V$
  generated by $v$, where $^H\delta_V(v)=v_{(-1)}\ot v_{(0)}$.
  Note that ${}^H\delta_A(g)=1\ot g$ by Lemma~\ref{le:onedimsub}, and hence $H$-colinearity
  of $^A\delta_V$ implies that
  \begin{align*}
   v_{(-1)}\ot {}^A\delta_V(v_{(0)})&=(\id \ot {}^A\delta _V){}^H\delta_V(v)\\
   &={}^H\delta_{A\ot V} {}^A\delta _V(v)\\ &={}^H\delta_{A\ot V}(g\ot v)=v_{(-1)}\ot g \ot v_{(0)}.
  \end{align*}
  Therefore $^A\delta_V(u)=g\ot u$ for all $u\in U$, which proves the lemma.
  \end{proof}

\begin{lem}
\label{le:VKsubcomod}
Assume that $A$ is pointed and that for each $h\in G(A)$, $\Bbbk h$ is an $H$-submodule of $A$. Let $g\in G(A)$ and let $0\ne V\in {}^A(\clcm H)$ such that
$^A\delta _V(v)=g\ot v$ for all $v\in V$. Then the simple $A$-subcomodules of $V\ot K$ are spanned as a vector space by
$v\ot g^{-1}f$ for some $f\in G(A)$ and $0\ne v\in V$.
\end{lem}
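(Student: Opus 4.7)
My plan is to analyse the coinvariance condition ${}^A\delta(w)=g'\ot w$ directly, using the explicit form of the half-braided diagonal coaction and the invertibility of $g$. Since $A$ is pointed, each simple $A$-subcomodule $W\subseteq V\ot K$ is one-dimensional, say $W=\fK w$ with ${}^A\delta(w)=g'\ot w$ for some $g'\in G(A)$. By Lemma~\ref{le:gsubcomod}(2) together with the standing hypothesis on group-like elements, $g^{-1}g'\in G(A)$ and $\fK(g^{-1}g')$ is a one-dimensional $H$-submodule, so there is a character $\chi:H\to\fK$ with $h\cdot(g^{-1}g')=\chi(h)\,g^{-1}g'$ for all $h\in H$.

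The crucial step is to apply $\id_A\ot\id_V\ot\varepsilon_K$ to both sides of ${}^A\delta(w)=g'\ot w$. Since ${}^A\delta_V(v)=g\ot v$, the explicit formula
\[
{}^A\delta(v\ot k)=g(v_{(-1)}\cdot k^{(1)})\ot v_{(0)}\ot k^{(2)}
\]
combined with the counit axiom $k^{(1)}\varepsilon(k^{(2)})=k$ collapses to the identity
\[
g\cdot c(w)=g'\ot\tilde v\quad\text{in }A\ot V,
\]
where $\tilde v:=(\id_V\ot\varepsilon_K)(w)$ and $c(v\ot a)=v_{(-1)}\cdot a\ot v_{(0)}$ is the braiding (applied to $w$ viewed inside $V\ot A$). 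Since $S$ is bijective on $H$, $c$ is invertible with $c^{-1}(a\ot v)=v_{(0)}\ot S^{-1}(v_{(-1)})\cdot a$. Multiplying the first tensor factor by $g^{-1}$ and applying $c^{-1}$ yields
\[
w=\tilde v_{(0)}\ot\bigl(S^{-1}(\tilde v_{(-1)})\cdot(g^{-1}g')\bigr).
\]
Using the character $\chi$, the right-hand factor simplifies to a scalar multiple of $g^{-1}g'$, giving $w=v\ot g^{-1}g'$ where $v:=\chi(S^{-1}(\tilde v_{(-1)}))\,\tilde v_{(0)}\in V$. Since $g^{-1}g'$ is invertible in $A$, $w\ne 0$ forces $v\ne 0$; together with $w\in V\ot K$ this forces $g^{-1}g'\in K$. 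Setting $f:=g'$ yields the claimed form.

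The main obstacle I anticipate is the reduction to $g\cdot c(w)=g'\ot\tilde v$: the half-braided diagonal coaction entangles a multiplication by $g$, a braiding, and a comultiplication, so some care is needed to verify that applying $\varepsilon_K$ to the last tensor factor cleanly decouples the outer multiplication by $g$ from the braiding. Once this identity is established, the remaining steps---inverting $g$, inverting the braiding, and applying the character $\chi$---are formal manipulations.
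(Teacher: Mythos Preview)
Your argument is correct and follows essentially the same route as the paper's proof: apply $\id\ot\id\ot\varepsilon$ to the coinvariance condition, multiply the first tensor factor by $g^{-1}$, and then undo the braiding. The paper writes this last step out explicitly as ``apply $(\id\ot S_H^{-1}\ot\id)(\id\ot{}^H\delta_V)$, flip, and act,'' which is exactly your $c^{-1}$; your packaging in terms of $c$ and $c^{-1}$ is a clean way to phrase the same manipulation.

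One small point: your appeal to Lemma~\ref{le:gsubcomod}(2) to conclude $g^{-1}g'\in G(A)$ is not justified here, since that lemma assumes each $\fK h$ is an $H$-\emph{subcomodule}, whereas the present lemma only assumes each $\fK h$ is an $H$-\emph{submodule}. Fortunately you do not need $g^{-1}g'\in G(A)$ at all: what you actually use is that $\fK g^{-1}g'$ is a one-dimensional $H$-submodule, and this follows directly from the $H$-module algebra structure of $A$ together with $H$-linearity of $S$ (so $\fK g^{-1}=\fK S(g)$ and hence $\fK g^{-1}g'$ are $H$-submodules). This is precisely how the paper argues the corresponding step. With that adjustment your proof is complete.
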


\begin{proof}
Let $W$ be a simple $A$-subcomodule of $V\ot K$. Since $A$ is pointed, $W$ is one-dimensional. Let $f\in G(A)$ and $0\ne x\in W$ with
$$ {^A}\delta_{V\ot K}(x)=f\ot x. $$ We write $x=\sum_i v_i\ot k_i$ with suitable linearly independent elements $v_i \in V$ and elements $0\ne k_i\in K$. Then on the one hand 
$${^A}\delta_{V\ot K}\left(\sum_i v_i\ot k_i\right)={^A}\delta_{V\ot K}(x)=f\ot \sum_i v_i\ot k_i,$$
and on the other hand 
$${^A}\delta_{V\ot K}\left(\sum_i v_i\ot k_i\right)=\sum_i g(v_{i(-1)}\cdot k_i^{(1)})\ot v_{i(0)}\ot k_i^{(2)}.$$
Applying to both expressions $\id \ot \id \ot \varepsilon$, we obtain that
$$ f\ot \sum_i \varepsilon(k_i)v_i
=\sum_i g(v_{i(-1)}\cdot k_i)
\ot v_{i(0)} \qquad (\text{in $A\ot V$}) $$
or, since $g$ is invertible in $A$ (with inverse $S(g)$),
$$ g^{-1}f\ot \sum_i \varepsilon(k_i)v_i
=\sum_i (v_{i(-1)}\cdot k_i)
\ot v_{i(0)}. $$
Next we apply
$(\id \ot S_H^{-1}\ot \id)(\id \ot {}^H\delta_V)$, flip the first two tensor factors, and act with the first tensor factor (in $H$) on the second. This results in the equation
$$ \sum_i S_H^{-1}(v_{i(-1)})\cdot (g^{-1}f)
\ot \varepsilon (k_i)v_{i(0)}
=\sum_i\big(S_H^{-1}(v_{i(-1)})v_{i(-2)}\cdot k_i\big)\ot v_{i(0)}.
$$
Recall that $A$ is an $H$-module algebra and $g,f\in G(A)$. Moreover, the antipode $S$ of $A$ is $H$-linear.
Thus $\Bbbk g^{-1}f=\Bbbk S(g)f$ is an $H$-submodule of $A$ by the assumption on group-like elements, and
it follows that
$$ \sum_i k_i \ot v_i\in 
\Bbbk g^{-1}f \ot V. $$
This implies that $x=\sum_iv_i\ot k_i\in V\ot g^{-1}f$.
\end{proof}

The following Proposition is a generalization of Proposition 6.3.6 in \cite{MR4164719}.

\begin{prop} \label{pro:freeness} 
Assume that the Hopf algebra $H$ is cosemisimple. 
\begin{enumerate}
\item Assume that any non-zero Hopf module $V$ in ${^{A}}({^H}\mathcal{M})_K$ 
contains a non-zero Hopf submodule which is $K$-free in $\clcm H$.
Then any Hopf module in ${^{A}}(\clcm H)_K$ is $K$-free in $\clcm H$.
\item Assume that $A$ is pointed and that
\begin{enumerate}
\item $g^{-1}\in K$ for all $g\in G(A)\cap K$,
\item for all $g\in G(A)$, the subspace $\fK g\subseteq A$ is a left $H$-subco\-module (i.e. ${^H}\delta_A(g)=1 \ot g$),
\item for all $g\in G(A),$ $\fK g$ is a left $H$-submodule. 
\end{enumerate}
Then any Hopf module in  ${^{A}}({^H}\mathcal{M})_K$ 
is $K$-free in ${^H}\mathcal{M}$.
\end{enumerate}  
\end{prop}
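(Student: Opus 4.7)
For part (1) I would argue by Zorn's lemma applied to the poset of pairs $(V', N')$, where $V'\subseteq V$ is a sub-Hopf module in ${^A}(\clcm H)_K$ and $N'\subseteq V'$ is an $H$-subcomodule such that the restricted multiplication $N'\ot K\to V'$ is an isomorphism, ordered by componentwise inclusion. Chains admit upper bounds by componentwise unions, since the colimit of isomorphisms along a chain is again an isomorphism. Let $(V_0, N_0)$ be a maximal element. If $V_0\ne V$, then $V/V_0$ is a non-zero Hopf module and by the hypothesis of part (1) contains a non-zero $K$-free Hopf submodule $W$. Set $\tilde W:=\pi^{-1}(W)\subseteq V$, where $\pi: V\to V/V_0$; this is a Hopf submodule of $V$. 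By Lemma~\ref{proj}, $W$ is projective in $(\clcm H)_K$, so the short exact sequence $0\to V_0\to \tilde W\to W\to 0$ in $(\clcm H)_K$ splits. Writing $W\cong N_W\ot K$ and picking a splitting $s$, the $H$-subcomodule $N_0 \oplus s(N_W)\subseteq \tilde W$ generates $\tilde W$ freely over $K$, which strictly enlarges $(V_0, N_0)$ and contradicts maximality.

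For part (2), by part (1) it suffices to exhibit a non-zero $K$-free Hopf submodule inside any non-zero $V\in {^A}(\clcm H)_K$. I would apply Lemma~\ref{le:gsubcomod}(3) (whose hypotheses come from (b)) to obtain $g\in G(A)$ and a non-zero $H$-subcomodule $U\subseteq V$ with ${}^A\delta_V(u)=g\ot u$ for all $u\in U$. Then $U$ itself is an object of ${^A}(\clcm H)$ with the constant coaction $u\mapsto g\ot u$, and Lemma~\ref{le:UKhopfmodule} makes $U\ot K$ into a Hopf module in ${^A}(\clcm H)_K$. The multiplication map $\mu: U\ot K\to V$, $u\ot k\mapsto u\cdot k$, is right $K$-linear and left $H$-colinear by construction, and left $A$-colinear as an immediate consequence of the Hopf module axiom for $V$ evaluated at $u\in U$. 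If I can show that $\mu$ is injective, then $\mu(U\ot K)=UK\subseteq V$ is a $K$-free Hopf submodule generated by $U$, completing the proof.

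The injectivity of $\mu$ is the main obstacle. Suppose for contradiction that $\ker\mu\ne 0$. Since $A$ is pointed, $\ker\mu$ contains a simple $A$-subcomodule, and Lemma~\ref{le:VKsubcomod} (which applies because (c) makes $\fK g$ an $H$-submodule) identifies this simple subcomodule as $\fK(u\ot g^{-1}f)$ for some $f\in G(A)$ and some $u\in U\setminus\{0\}$. In particular $g^{-1}f\in K$, and it is group-like in $A$ by Lemma~\ref{le:gsubcomod}(2), so assumption (a) supplies $(g^{-1}f)^{-1}\in K$. Right-multiplying the relation $u\cdot g^{-1}f=0$ by this inverse forces $u=0$, a contradiction. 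This step is where the three hypotheses interlock cleanly: (b) provides the subcomodule $U$ via Lemma~\ref{le:gsubcomod}, (c) constrains the simple $A$-subcomodules of $U\ot K$ to the form $u\ot g^{-1}f$ via Lemma~\ref{le:VKsubcomod}, and (a) converts that group-like element into a unit inside $K$ so that the cancellation goes through.
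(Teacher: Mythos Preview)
Your proposal is correct and follows essentially the same route as the paper: both parts use the same lemmas (\ref{le:gsubcomod}, \ref{le:UKhopfmodule}, \ref{le:VKsubcomod}) and the same injectivity-via-simple-subcomodule argument for~(2), and a Zorn-plus-contradiction argument for~(1). The only cosmetic difference is that in~(1) you obtain the enlarging step by invoking Lemma~\ref{proj} to split $0\to V_0\to \tilde W\to W\to 0$ in $(\clcm H)_K$, whereas the paper splits an $H$-subcomodule directly via cosemisimplicity and finishes with a five-lemma diagram; these are equivalent packagings of the same idea.
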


\begin{proof}
(1) Let $V$ be a non-zero Hopf module in ${^{A}}({^H}\mathcal{M})_K$ with $K$-module structure map $\lambda_{V,K}$. Let $T$ be the set of left $H$-subcomodules $X$ of $V$ such that $\lambda_{V,K}(X\ot K)\subseteq V$ is an object in ${^{A}}({^H}\mathcal{M})_K$ and $\lambda_{V,K}$ restricted to $X\ot K$ is a monomorphism. The set $T$ contains $0$ and is partially ordered by inclusion. The union of the elements of a totally ordered subset of $T$ is an element in $T$. Hence by Zorn's Lemma there is a maximal element $\widetilde{X}$ of $T$. Let $U=\lambda_{V,K}(\widetilde{X}\ot K)$ and assume for a contradiction that $U\neq V$. Then $V/U$ is a non-zero object in ${^{A}}({^H}\mathcal{M})_K$. By assumption, there is a non-zero Hopf submodule $V'$ of $V$ strictly containing $U$ such that $V'/U$ is $K$-free in $\clcm H$. Thus there is an $H$-subcomodule $X'$ of $V'$ with $\widetilde{X}\subset U\subsetneq X'$ such that the restricted natural induced $K$-module structure map $X'/U \ot K\rightarrow V'/U$ is bijective. Since ${^H}\mathcal{M}$ is semisimple, there is an $H$-subcomodule $Y'\neq 0$ such that $X'=U\oplus Y'$.
Let $Y=Y'+\widetilde{X}$. Then
$$X'/U=(U\oplus Y')/U=(U+Y)/U\cong Y/\widetilde{X}$$
as $H$-comodules, since $\widetilde{X}=Y\cap U$.
Thus we have the following natural exact sequences with isomorphisms between the kernels and the cokernels  
\begin{center}
\begin{tikzcd} 
0 \arrow[r] & \widetilde{X}\ot K \arrow[d,"\cong"]\arrow[r] & Y\ot K \arrow[d]\arrow[r]    &   Y/\widetilde{X}\ot K \arrow[d,"\cong"]  \arrow[r] & 0 \\
0 \arrow[r] & U \arrow[r] & V' \arrow[r]    &   V'/U    \arrow[r] & 0 .	
\end{tikzcd}
\end{center}
Hence $Y\ot K$ is isomorphic to the Hopf submodule $V'\subseteq V$. This is a contradiction to the maximality of $\widetilde{X}$ and thus $V$ is $K$-free in ${^H}\mathcal{M}$.

(2) Note that the claim in brackets behind assumption (b) holds by Lemma~\ref{le:gsubcomod}.

Let $V$ be a non-zero Hopf module in ${^{A}}({^H}\mathcal{M})_K$. By (1), it suffices to prove that $V$ contains a non-zero Hopf submodule which is $K$-free in ${^H}\mathcal{M}$. By Lemma~\ref{le:gsubcomod}, for which we use assumption (b) and that $A$ is pointed, there exist $g\in G(A)$ and a non-zero $H$-subcomodule $U$ of $V$ with
$$ {}^A\delta_V(u)=g\ot u \quad \text{for all $u\in U$.}
$$
By Lemma \ref{le:UKhopfmodule}, the tensor product $U\ot K$ is an object in ${^{A}}({^H}\mathcal{M})_K$, where the left $H$-coaction and the left $A$-coaction are diagonal, and the right $K$-action is multiplication on the second tensor factor.
Let
$$ \varphi: U\ot K\to V$$
be the $K$-module structure map $\lambda_{V,K}$
restricted to $U\ot K$. Then $\varphi$ is a morphism
in ${^{A}}({^H}\mathcal{M})_K$ and we are going to
prove that it is a monomorphism.
Now assume for a contradiction that $\ker \varphi$ is not trivial. Note that $\ker \varphi $ is a subobject of $U\ot K$ in $^A(\clcm H)$. In particular, since $A$ is pointed,
$\ker \varphi\subset U\ot K$ contains a simple $A$-subcomodule. By assumption (2)(c) we may apply Lemma~\ref{le:VKsubcomod} to $U\ot K$. It follows that $u\ot g^{-1}f\in \ker \varphi$ for some $0\ne u\in U$ and $f\in G(A)$. Moreover, $g^{-1}f\in G(A)\cap K$ by Lemma~\ref{le:gsubcomod}(2). Since $g^{-1}f$ is invertible in $K$ by assumption (a) and since $\ker \varphi$ is a $K$-submodule, it follows that $u\ot 1 \in \ker \varphi$, a contradiction to $u\ne 0$ in $V$. This completes the proof of the proposition.
\end{proof}

\section{The $H$-coaction compatible version of the decomposition Theorem}

 An important part of the proof of Theorem~\ref{decomp} below is to prove that $A$ is an injective object in $({^H}\mathcal{M})^{\overline{A}}$. For this we need some preliminary lemmata. In the next lemma we consider $A\ot_K A$ as a right $\overline{A}$-comodule in ${^H}\mathcal{M}$ with diagonal left $H$-coaction and with right $\overline{A}$-comodule structure map
$$\delta^{\overline{A}}_{A\ot_K A}: A\ot_K A \rightarrow A\ot_K A\ot \overline{A}, \quad x\ot y \mapsto x\ot y^{(1)}\ot \overline{y^{(2)}}.$$ 
This is well-defined, since $A\rightarrow A\ot \overline{A}$,  $y \mapsto y^{(1)}\ot \overline{y^{(2)}}$ is left $K$-linear, where $ A\ot \overline{A}$ is a left $K$-module by multiplication on the first tensor factor. Indeed, the $K$-linearity can be verified by the following calculation, where $\pi:A\rightarrow \overline{A}$ denotes the canonical map.  
\begin{align*}
(\id\ot\pi)\Delta\mu(k\ot a)&=k^{(1)}(k^{(2)}{}_{(-1)}\cdot a^{(1)})\ot \pi(k^{(2)}{}_{(0)}a^{(2)})\\
                            &=k^{(1)}(\varepsilon(k^{(2)}{}_{(0)})k^{(2)}{}_{(-1)}\cdot a^{(1)})\ot\pi(a^{(2)})\\
                            &=ka^{(1)}\ot\pi(a^{(2)})\\
                            &=(\mu\ot\id_{\overline{A}})(\id_K\ot(\id_A\ot\pi)\Delta)(k\ot a)
\end{align*}
for all $k\in K$ and $a\in A$, where $\mu$ is the multiplication in $A$. The second equation holds, since $K$ is a left coideal, a left $H$-subcomodule and $\pi(ka)=\varepsilon(k)\pi(a)$ for all $k\in K$, $a\in A$. 
Moreover, $\delta^{\overline{A}}_{A\ot_K A}$ is left $H$-colinear.

\begin{lem} \label{can}
The canonical map 
$$\operatorname{can}: A\ot_K A\rightarrow A\ot\overline{A}, \quad x\ot y\mapsto xy^{(1)}\ot \overline{y^{(2)}}$$
\noindent
is a right $\overline{A}$-colinear left $H$-colinear bijective map.
\end{lem}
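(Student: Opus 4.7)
The plan has three stages: well-definedness and colinearity of $\operatorname{can}$, the construction of an explicit inverse, and verification of mutual invertibility. The map $\tau: A \to A \otimes \overline{A}$, $y \mapsto y^{(1)} \otimes \overline{y^{(2)}}$, is shown in the computation immediately preceding the lemma to be left $K$-linear with respect to multiplication on the first factor. Since $A \otimes \overline{A}$ is a left $A$-module via multiplication on the first factor, this extends to the well-defined map $\operatorname{can}(x \otimes y) = x \cdot \tau(y) = xy^{(1)} \otimes \overline{y^{(2)}}$ on $A \otimes_K A$. Left $H$-colinearity is then immediate from $\mu$, $\Delta$, and $\pi$ being morphisms in $\clcm H$, and right $\overline{A}$-colinearity is a short check using coassociativity: both $(\delta^{\overline{A}} \circ \operatorname{can})(x \otimes y)$ and $((\operatorname{can} \otimes \id) \circ \delta^{\overline{A}}_{A \otimes_K A})(x \otimes y)$ equal $xy^{(1)} \otimes \overline{y^{(2)}} \otimes \overline{y^{(3)}}$.

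For bijectivity, I define $\operatorname{can}^{-1}(x \otimes \overline{y}) = xS(y^{(1)}) \otimes y^{(2)}$, where $S$ is the braided antipode of $A$. At the level of $A \otimes A$, the map $\beta(x \otimes y) = xy^{(1)} \otimes y^{(2)}$ is bijective with inverse $\beta^{-1}(x \otimes y) = xS(y^{(1)}) \otimes y^{(2)}$, by a routine use of coassociativity together with the antipode axioms $S(y^{(1)})y^{(2)} = \varepsilon(y) = y^{(1)}S(y^{(2)})$. The main obstacle is to show that $\beta^{-1}$, composed with the quotient $A \otimes A \to A \otimes_K A$, descends to $A \otimes \overline{A}$, i.e.\ vanishes on $A \otimes (K^+A)$. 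For $y = ka$ with $k \in K$, braided multiplicativity of $\Delta$ gives $\Delta(ka) = k^{(1)}(k^{(2)}{}_{(-1)} \cdot a^{(1)}) \otimes k^{(2)}{}_{(0)} a^{(2)}$. Since $K$ is an $H$-subcomodule, $k^{(2)}{}_{(0)} \in K$, so the $K$-balancing allows me to slide this factor to the left:
\[xS((ka)^{(1)}) \otimes k^{(2)}{}_{(0)} a^{(2)} = xS((ka)^{(1)}) k^{(2)}{}_{(0)} \otimes a^{(2)}.\]
Applying the braided anti-multiplicativity $S(bc) = S(b_{(-1)} \cdot c) S(b_{(0)})$ together with the $H$-colinearity of $\Delta$ in the form $k^{(1)}{}_{(-1)} k^{(2)}{}_{(-1)} \otimes k^{(1)}{}_{(0)} \otimes k^{(2)}{}_{(0)} = k_{(-1)} \otimes (k_{(0)})^{(1)} \otimes (k_{(0)})^{(2)}$ and then the antipode axiom on $k_{(0)}$, the factor $S((ka)^{(1)}) k^{(2)}{}_{(0)}$ collapses to $\varepsilon(k_{(0)}) S(k_{(-1)} \cdot a^{(1)})$. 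This in turn reduces via $H$-colinearity of $\varepsilon$ (namely $\varepsilon(k_{(0)}) k_{(-1)} = \varepsilon(k)\, 1_H$) and $H$-linearity of $S$ to $\varepsilon(k) S(a^{(1)})$. Hence $\operatorname{can}^{-1}(x \otimes \overline{ka}) = \varepsilon(k)\, xS(a^{(1)}) \otimes a^{(2)}$ in $A \otimes_K A$, which vanishes whenever $k \in K^+$.

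Finally, the identities $\operatorname{can} \circ \operatorname{can}^{-1} = \id$ and $\operatorname{can}^{-1} \circ \operatorname{can} = \id$ follow immediately from the corresponding identities for $\beta$ and $\beta^{-1}$ on $A \otimes A$, using coassociativity and the antipode axioms, which completes the proof.
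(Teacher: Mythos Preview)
Your proof is correct and uses the same inverse map as the paper, namely $x\otimes \overline{y}\mapsto xS(y^{(1)})\otimes y^{(2)}$, but the way you establish well-definedness differs. The paper presents both $A\otimes_KA$ and $A\otimes\overline{A}$ as cokernels of maps $A\otimes K\otimes A\to A\otimes A$, and then exhibits an explicit automorphism $\Psi=\Phi_{A\otimes K}(\Phi\otimes\id_A)$ of $A\otimes K\otimes A$ (built from the maps $\Phi_X\colon x\otimes a\mapsto xa^{(1)}\otimes a^{(2)}$) that intertwines the two presentations, so the descent to the quotients follows formally from a commutative diagram. You instead perform the Sweedler computation directly, expanding $\Delta(ka)$ via braided multiplicativity, sliding $k^{(2)}{}_{(0)}\in K$ across the balanced tensor, and collapsing via the braided anti-multiplicativity of $S$ together with $H$-colinearity of $\Delta$ and $\varepsilon$. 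Your route is more elementary and self-contained; the paper's route makes the structural reason for bijectivity (an automorphism of $A\otimes A$ compatible with both kernel presentations) more transparent and avoids manipulating the braided antipode identity explicitly.
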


\begin{proof}
The $H$- and $\overline{A}$-colinearity properties follow directly from the remarks above the lemma.
For the bijectivity of $\operatorname{can}$ we follow the proof in Lemma~6.3.4 in \cite{MR4164719}. More precisely, we write both $A\ot_KA$ and $A\ot \overline{A}$ as quotients of $A\ot A$ and provide an automorphism of $A\ot A$ which gives rise to the automorphism $\operatorname{can}$ and to its inverse on the quotients.

Recall the exact sequences
\begin{center}
\begin{tikzcd} 
  A\ot K\ot A \arrow[rr,"\mathrm{id}\ot \mu-\mu \ot \mathrm{id}"] && A\ot A \arrow[r] & A\ot _KA \arrow[r] & 0,
\end{tikzcd}
\begin{tikzcd} 
  A\ot K\ot A \arrow[rr,"\mathrm{id}\ot \mu-\mathrm{id}\ot \varepsilon \ot \mathrm{id}"] && A\ot A \arrow[r] & A\ot \overline{A} \arrow[r] & 0.
\end{tikzcd}
\end{center}
In view of the definition of $\operatorname{can}$ there is a natural choice for the automorphism of $A\ot A$. For convenience we work with a slightly more general definition:

For any right $A$-module left $H$-comodule $X$ (e.g. $X=A$) let 
$$\Phi_X: X\ot A\rightarrow X\ot A, \qquad x\ot a \mapsto xa^{(1)}\ot a^{(2)}.$$
The morphisms $\Phi_X$ are $H$-colinear and bijective with inverse
$$ \Phi_X^{-1}:X\ot A\rightarrow X\ot A,
\qquad x\ot a \mapsto xS(a^{(1)})\ot a^{(2)}.$$
We use the morphisms $\Phi_X$ to construct an isomorphism between the above two exact sequences. We are almost done:
\begin{center}
\begin{tikzcd} 
A\ot K\ot A \arrow[d,"?"]\arrow{r}{\mathrm{id}_A\ot\mu-\mu\ot\mathrm{id}_A} &[5em] A\ot A \arrow[d,"\Phi_A"]\arrow[r] & A\ot_K A \arrow[d,dashrightarrow,"\operatorname{can}"]\arrow[r] & 0 &\\
A\ot K\ot A \arrow[r,"\mathrm{id}_A\ot\mu-\mathrm{id}_A\ot\varepsilon\ot\mathrm{id}_A"] & A\ot A \arrow[r] & A\ot \overline{A}   \arrow[r] & 0.&		\end{tikzcd}
\end{center}
The isomorphism corresponding to ? in the latter diagram can be given by composing two maps of the form $\Phi_X$. Indeed, the restrictions of $\Phi_A$ and $\Phi_A^{-1}$ to $A\ot K$ induce bijections 
$$\Phi, \Phi^{-1}: A\ot K \rightarrow A\ot K.$$
Thus $\Phi\ot \id_A$ is an automorphism of $A\ot K\ot A$. Moreover, $\Phi_{A\ot K}$ is an automorphism of $A\ot K\ot A$, where $A\ot K$ is a right $A$-module via
$$ A\ot K\ot A \overset{\id \ot c}{\longrightarrow }A\ot A\ot K \overset{\mu }{\to }A\ot K, $$
and $c$ is the braiding.
(The definition works since $K$ is a left $H$-subcomodule of $A$, and hence $c(K\ot A)\subseteq A\ot K$.) Then clearly
$$ \Psi =\Phi_{A\ot K} (\Phi \ot \id_A )$$
is an isomorphism of $A\ot K\ot A$.
It is a routine calculation that the left square in the above diagram commutes when ? is replaced by $\Psi$. This completes the proof of the lemma.
\end{proof}

\begin{lem} \label{Hombij}
Let $D$ be a bialgebra, $C$ a coalgebra in ${^D}\mathcal{M}$ and $X$ a left $D$-comodule. For any right $C$-comodule $V$ in ${^D}\mathcal{M}$, the map
$$ {^D}\mathrm{Hom}^{C}(V,X\ot C)\stackrel{\cong}{\longrightarrow} {^D}\mathrm{Hom}(V,X), \quad f\mapsto (\mathrm{id}\ot\varepsilon) f $$
is bijective with inverse given by $\varphi\mapsto(\varphi\ot\mathrm{id})\delta^C_V$, where $X\ot C$ is a right $C$-comodule with structure map $\mathrm{id}_X\ot \Delta$ and a left $D$-comodule by diagonal coaction.
\end{lem}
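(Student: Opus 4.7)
The plan is to verify that the two maps are well-defined, that they land in the asserted hom-spaces, and that they are mutually inverse. Since this is ultimately a coinduction-style adjunction, the work is largely bookkeeping in Sweedler notation together with the standing fact that $\varepsilon$ and $\Delta$ are $D$-colinear (because $C$ is a coalgebra in $\clcm D$ and $\fK$ carries the trivial $D$-coaction).

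First I would check that the forward map $f \mapsto (\id_X\ot\varepsilon)f$ actually produces a $D$-colinear map. Since $f$ is $D$-colinear by hypothesis and $\id_X\ot\varepsilon\colon X\ot C\to X$ is $D$-colinear (the coaction on $X\ot C$ being diagonal and $\varepsilon$ being a morphism in $\clcm D$), the composite is $D$-colinear. Next I would check that the backward map $\varphi\mapsto(\varphi\ot\id_C)\delta^C_V$ produces a map which is both $D$-colinear and right $C$-colinear. The $D$-colinearity is immediate from the $D$-colinearity of $\delta^C_V$ (which holds because $V$ is a $C$-comodule in $\clcm D$) and of $\varphi$. The right $C$-colinearity is exactly the coassociativity of $\delta^C_V$: applying $\id_X\ot\Delta$ to $(\varphi\ot\id_C)\delta^C_V$ and pulling $\varphi$ past $\Delta$ gives $(\varphi\ot\id_C\ot\id_C)(\id_V\ot\Delta)\delta^C_V=(\varphi\ot\id_C\ot\id_C)(\delta^C_V\ot\id_C)\delta^C_V$, which is the right $C$-coaction on $X\ot C$ applied to the image.

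Finally I would verify that the two assignments are mutually inverse. For $\varphi\in{}^D\mathrm{Hom}(V,X)$, the composite $(\id_X\ot\varepsilon)(\varphi\ot\id_C)\delta^C_V$ equals $\varphi\circ(\id_V\ot\varepsilon)\delta^C_V=\varphi$ by the counit axiom for the $C$-comodule $V$. For $f\in{}^D\mathrm{Hom}^C(V,X\ot C)$, the right $C$-colinearity of $f$ gives $(f\ot\id_C)\delta^C_V=(\id_X\ot\Delta)f$; applying $\id_X\ot\varepsilon\ot\id_C$ and using the counit axiom on $C$ recovers $f$, so $((\id_X\ot\varepsilon)f\ot\id_C)\delta^C_V=f$.

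There is no serious obstacle here; the statement is essentially the hom--cotensor adjunction in the $D$-equivariant setting. The only thing that needs a moment of care is tracking that the diagonal $D$-coaction on $X\ot C$ is indeed the one compatible with both $\id_X\ot\varepsilon$ and $\id_X\ot\Delta$, which is exactly what being a coalgebra in $\clcm D$ guarantees.
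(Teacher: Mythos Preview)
Your proof is correct and follows essentially the same approach as the paper's own proof: both verify $D$-colinearity of the forward map via the $D$-colinearity of $\varepsilon$, verify $D$- and $C$-colinearity of the backward map via the $D$-colinearity of $\delta^C_V$ and its coassociativity, and establish mutual inverseness using the counit axiom together with the $C$-colinearity of $f$. The only difference is presentational---you frame it as a coinduction-style adjunction and give slightly more commentary, while the paper writes out the key identities directly.
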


\begin{proof}
For the category of vector spaces instead of the category of left $D$-comodules there is a proof of the claim in Lemma~1.2.10 in \cite{MR4164719}, which we essentially reproduce here. Let $f\in {^D}\mathrm{Hom}^{C}(V,X\ot C)$. Then $(\mathrm{id}\ot\varepsilon) f$ is left $D$-colinear, since $\varepsilon$ and $f$ are. We have
\begin{align*}
((\mathrm{id}_X\ot \varepsilon)f\ot \mathrm{id}_C)\delta^C_V(v)&=(\mathrm{id}_X\ot \varepsilon\ot \mathrm{id}_C)(\mathrm{id}_X\ot \Delta)f(v)\\
  &=(\mathrm{id}_X\ot \mathrm{id}_C)f(v)=f(v)
\end{align*}
for all $v\in V,$ where we first use right $C$-colinearity of $f$ and then the counit axiom. Conversely, let $\varphi\in {^D}\mathrm{Hom}(V,X)$. Then $(\varphi\ot\mathrm{id})\delta^C_V$ is right $C$-colinear because of the coassociativity of $\delta^C_V$ and left $D$-colinear, since $\varphi$ and $\delta^C_V$ are. Moreover,
$$
(\mathrm{id}_X\ot \varepsilon)(\varphi\ot\mathrm{id}_C)\delta^C_V(v)=\varphi(v_{(0)})\varepsilon(v_{(1)})=\varphi(v)
$$
for all $v\in V$.
\end{proof}

\begin{lem} \label{directinj}
Let $D$ be a cosemisimple bialgebra, $C$ a coalgebra in ${^D}\mathcal{M}$ and $V$ a right $C$-comodule in ${^D}\mathcal{M}$. If there is a left $D$-comodule $X$ such that $V$ is a direct summand of $X\ot C$ as left $D$-subcomodule right $C$-subcomodule, then $V$ is an injective object in $({^D}\mathcal{M})^C$. 
\end{lem}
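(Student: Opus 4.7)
The approach is to first prove that $X\ot C$ is injective in $(\clcm D)^C$ and then invoke the standard fact that any direct summand of an injective object is injective. Here $X\ot C$ is equipped with the diagonal left $D$-coaction and the right $C$-coaction $\id_X\ot\Delta$, as in Lemma~\ref{Hombij}.

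For the injectivity of $X\ot C$, the plan is to use Lemma~\ref{Hombij} as an adjunction-style reduction. Let $i: M\hookrightarrow N$ be a monomorphism in $(\clcm D)^C$ and $f: M\to X\ot C$ a morphism in $(\clcm D)^C$. The goal is to produce a lift $\tilde f: N\to X\ot C$ in $(\clcm D)^C$ with $\tilde f i=f$. By Lemma~\ref{Hombij}, $f$ corresponds to the left $D$-colinear map $\varphi=(\id_X\ot\varepsilon)f: M\to X$, and the lifting problem is equivalent to extending $\varphi$ along $i$ in $\clcm D$. Since $D$ is cosemisimple, $\clcm D$ is semisimple, so the $D$-colinear monomorphism $i$ admits a $D$-colinear retraction $r:N\to M$. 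Setting $\tilde\varphi=\varphi r$ and $\tilde f=(\tilde\varphi\ot\id_C)\delta^C_N$ (the image of $\tilde\varphi$ under the inverse bijection of Lemma~\ref{Hombij}), the right $C$-colinearity of $i$ yields
\[
\tilde f i=(\tilde\varphi\ot\id_C)\delta^C_N\,i=(\tilde\varphi i\ot\id_C)\delta^C_M=(\varphi\ot\id_C)\delta^C_M=f,
\]
where the last equality uses that $(\varphi\ot\id_C)\delta^C_M$ is exactly the preimage of $\varphi$ in ${^D}\mathrm{Hom}^C(M,X\ot C)$.

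For the passage to the summand, write $X\ot C=V\oplus W$ in $(\clcm D)^C$ with inclusion $\iota:V\hookrightarrow X\ot C$ and projection $p:X\ot C\to V$. Given a lifting problem $f: M\to V$ against a mono $i: M\hookrightarrow N$ in $(\clcm D)^C$, apply the injectivity of $X\ot C$ to the morphism $\iota f$ to produce $\tilde g: N\to X\ot C$ with $\tilde g i=\iota f$, and set $\tilde f=p\tilde g$. Then $\tilde f i=p\iota f=f$, so $V$ is injective.

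I do not anticipate any substantive obstacle. The entire content sits in the first step, which is formal once one has the adjunction of Lemma~\ref{Hombij} together with the splitting of $D$-colinear monomorphisms coming from the cosemisimplicity of $D$; the only point to be slightly careful about is verifying the naturality identity $\tilde fi=f$ from the explicit formulas of Lemma~\ref{Hombij}, and this is an immediate calculation.
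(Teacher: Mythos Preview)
Your proof is correct and follows essentially the same route as the paper's own argument: reduce to the injectivity of $X\ot C$ via Lemma~\ref{Hombij}, use cosemisimplicity of $D$ to extend the corresponding $D$-colinear map $M\to X$ to $N$, and then pass to the direct summand. The only cosmetic difference is that the paper phrases the extension in terms of a $D$-comodule complement of $i(U)$ in $W$ rather than a retraction, and states the summand reduction at the start rather than the end.
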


\begin{proof}
For the category of vector spaces instead of the category of left $D$-comodules there is a proof of the claim in Proposition~6.3.8 in \cite{MR4164719}. Since direct summands of an injective object in $({^D}\mathcal{M})^C$ are injective, it suffices to show that $X\ot C$ is injective for any left $D$-comodule $X$. Let $U,W\in ({^D}\mathcal{M})^C$ and $i:U\rightarrow W$ a monomorphism in $({^D}\mathcal{M})^C$ and $f:U\rightarrow X\ot C$ a morphism in $({^D}\mathcal{M})^C$. From 
Lemma~\ref{Hombij} we know that there is a left $D$-colinear map $g: U\rightarrow X$ such that $$f(u)=(g\ot\mathrm{id}_C)\delta^C_U(u)$$ for all $u\in U$. Since ${^D}\mathcal{M}$ is semisimple, $i(U)$ has a $D$-comodule complement in $W$ and thus there is a $D$-colinear map $g_1:W\rightarrow X$ such that $g=g_1i$. Let 
$$g_2:W\rightarrow X\ot C,\quad w\mapsto g_1(w_{(0)})\ot w_{(1)}.$$
Then $g_2$ is right $C$-colinear and 
\begin{align*}
g_2i(u)&=g_1(i(u)_{(0)})\ot i(u)_{(1)}\\
       &= g_1(i(u_{(0)}))\ot u_{(1)}\\
       &=g(u_{(0)})\ot u_{(1)}\\
       &=f(u)
\end{align*}
for all $u\in U$. Moreover, $g_2$ is $D$-colinear.  
\end{proof}

Now all preparations are made to prove the following main Theorem. There the $K$-module structure on $K\ot \overline{A}$ is defined by multiplication on the first tensor factor and the right $\overline{A}$-comodule structure by comultiplication on the second tensor factor.

\begin{thm} \label{decomp}
Let $H$ be a cosemisimple Hopf algebra. Let $A$ be a pointed braided Hopf algebra in ${^{H}_{H}}\mathcal{YD}$ such that $\fK g$ is an $H$-subcomodule and an $H$-submodule for all $g\in G(A)$. Let $K$ be a left coideal subalgebra of $A$ in ${^H}\mathcal{M}$ such that $g^{-1}\in K$ for all $g\in G(A)\cap K$. Let $\overline{A}=A/K^+A$.
Then the following hold.
\begin{enumerate}
\item $A$ is an injective object in $({^H}\mathcal{M})^{\overline{A}}$. 
\item There is a left $K$-linear right $\overline{A}$-colinear and left $H$-colinear isomorphism $K\ot \overline{A}\rightarrow A$. 
\end{enumerate}
\end{thm}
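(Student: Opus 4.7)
The plan is to prove (1) by realising $A$ as a direct summand of $A\ot\overline{A}$ in $({}^H\mathcal{M})^{\overline{A}}$ and invoking Lemma~\ref{directinj}, and then to use (1) together with cosemisimplicity of $H$ to produce an $(H,\overline{A})$-colinear normalised section $\gamma\colon\overline{A}\to A$ of $\pi$; the iso in (2) will then be $\phi(k\ot\bar a):=k\gamma(\bar a)$. For (1), Proposition~\ref{pro:freeness}(2) applied to $A$ itself, viewed as a Hopf module in ${}^A({}^H\mathcal{M})_K$, produces a left $H$-subcomodule $X\subseteq A$ together with a right-$K$-linear left-$H$-colinear isomorphism $X\ot K\to A$, $x\ot k\mapsto xk$. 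Inspection of the Zorn argument in the proof of Proposition~\ref{pro:freeness}(1) allows us to start with $\fK 1_A$, which lies in the poset $T$ because $K\subseteq A$ is itself a subobject in ${}^A({}^H\mathcal{M})_K$, so we may arrange $1_A\in X$. By cosemisimplicity of $H$ we split $X=\fK 1\oplus X'$ with $X':=X\cap\ker\varepsilon_A$, an $H$-subcomodule on which $\varepsilon_A$ vanishes. Combining $X\ot K\cong A$ with Lemma~\ref{can} and the cancellation $X\ot K\ot_K A\cong X\ot A$ yields an isomorphism $A\ot\overline{A}\cong X\ot A$ in $({}^H\mathcal{M})^{\overline{A}}$, where the right $\overline{A}$-coaction on $X\ot A$ is $\id_X\ot\delta^{\overline{A}}_A$. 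The decomposition $X\ot A=(\fK 1\ot A)\oplus(X'\ot A)$ identifies $A$ (via $a\mapsto 1\ot a$) as a direct summand of $A\ot\overline{A}$ in $({}^H\mathcal{M})^{\overline{A}}$, and Lemma~\ref{directinj} applied with $(D,C,X)=(H,\overline{A},A)$ gives~(1).

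For (2), by cosemisimplicity of $H$ pick an $H$-colinear section $\gamma_0\colon\overline{A}\to A$ of $\pi$ with $\gamma_0(1_{\overline{A}})=1_A$, and let $r\colon A\ot\overline{A}\to A$ be the $(H,\overline{A})$-colinear retraction of $\delta^{\overline{A}}_A$ coming from the direct-summand decomposition in (1); explicitly it corresponds under $A\ot\overline{A}\cong X\ot A$ to the projection $X\ot A\to\fK 1\ot A\cong A$. Define
\[
\gamma(\bar a):=r\bigl(\gamma_0(\bar a^{(1)})\ot\bar a^{(2)}\bigr).
\]
Coassociativity combined with the $\overline{A}$-colinearity of $r$ gives $\overline{A}$-colinearity of $\gamma$, and $H$-colinearity is clear. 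To see that $\gamma$ is a section of $\pi$, the key identity is $\pi\circ r=\varepsilon_A\ot\id_{\overline{A}}$: by Lemma~\ref{Hombij} both sides are $(H,\overline{A})$-colinear maps $A\ot\overline{A}\to\overline{A}$ determined by their composites with $\varepsilon_{\overline{A}}$, and these composites agree on $\fK 1\ot A$ (using $r\delta^{\overline{A}}_A=\id_A$ and $\varepsilon_{\overline{A}}\pi=\varepsilon_A$) and on $X'\ot A$ (both vanish, since $r$ kills this summand and $\varepsilon_A|_{X'}=0$). Hence $\pi\gamma(\bar a)=\varepsilon_A(\gamma_0(\bar a^{(1)}))\bar a^{(2)}=\varepsilon_{\overline{A}}(\bar a^{(1)})\bar a^{(2)}=\bar a$, and $\gamma(1_{\overline{A}})=r\delta^{\overline{A}}_A(1_A)=1_A$.

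With $\gamma$ in hand, set $\phi(k\ot\bar a):=k\gamma(\bar a)$. Left $K$-linearity and left $H$-colinearity are immediate. Right $\overline{A}$-colinearity reduces to the identity $\delta^{\overline{A}}_A(kl)=k\,l^{(1)}\ot\pi(l^{(2)})$ for $k\in K$, $l\in A$, which comes from $\Delta(kl)=k^{(1)}(k^{(2)}{}_{(-1)}\cdot l^{(1)})\ot k^{(2)}{}_{(0)}l^{(2)}$, the left-coideal property $k^{(2)}\in K$, the relation $\pi(k^{(2)}{}_{(0)}l^{(2)})=\varepsilon_K(k^{(2)}{}_{(0)})\pi(l^{(2)})$, and the $H$-colinearity of $\varepsilon_K$, combined with the $\overline{A}$-colinearity of $\gamma$. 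Bijectivity of $\phi$ is the classical content of Theorem~6.3.2 in \cite{MR4164719}: once $\gamma$ is an $\overline{A}$-colinear section of $\pi$ and $A$ is right-$K$-free, the left $K$-linear map $k\ot\bar a\mapsto k\gamma(\bar a)$ is automatically a bijection. The new ingredient here is that our $\gamma$ is additionally left $H$-colinear, which upgrades $\phi$ to a left $H$-colinear isomorphism.

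The step I expect to be the main obstacle is the construction of the refined section $\gamma$: the naive $H$-colinear $\gamma_0$ is generally not $\overline{A}$-colinear, and the correction via $r$ could a priori destroy the section property. Establishing the identity $\pi\circ r=\varepsilon_A\ot\id_{\overline{A}}$ through Lemma~\ref{Hombij} is therefore essential, and for this the decomposition $X=\fK 1\oplus X'$ with $\varepsilon_A|_{X'}=0$ chosen in the proof of (1) is used in a non-trivial way.
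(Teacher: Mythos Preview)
Your proof of (1) is correct and close in spirit to the paper's, though you apply Proposition~\ref{pro:freeness} directly to $A$ (obtaining $A\cong X\ot K$) rather than to $A/K$ (obtaining a splitting $A\cong K\oplus A'$ in $({}^H\mathcal{M})_K$). Both routes produce the same direct-summand embedding $A\hookrightarrow A\ot\overline{A}$ via $\delta^{\overline{A}}_A$ and then invoke Lemma~\ref{directinj}.

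Part (2), however, has a genuine gap at the bijectivity step. Your appeal to \cite[Theorem~6.3.2]{MR4164719} is not justified: that theorem produces \emph{some} left $K$-linear right $\overline{A}$-colinear isomorphism $K\ot\overline{A}\to A$, but it does \emph{not} assert that every map $k\ot\bar a\mapsto k\gamma(\bar a)$ built from an arbitrary $\overline{A}$-colinear section $\gamma$ of $\pi$ is bijective. In fact this fails. Take $H=\fK$, $A=\fK[\mathbb{Z}/4\mathbb{Z}]$ with group-like basis $e_0,\dots,e_3$, and $K=\fK\{e_0,e_2\}$; all hypotheses of the theorem hold. One may choose $X=\fK\{e_0,e_1\}$ with $1_A=e_0\in X$ and $X'=\fK(e_1-e_0)$. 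If you pick $\gamma_0(\bar e_1)=\tfrac12(e_1+e_3)$ (a perfectly legitimate $H$-colinear section since $H=\fK$), then your formula gives
\[
\gamma(\bar e_1)=r\big(\tfrac12(e_1+e_3)\ot\bar e_1\big)=\tfrac12\big(r(e_1\ot\bar e_1)+r(e_3\ot\bar e_1)\big)=\tfrac12(e_1+e_3),
\]
because both $e_1\ot\bar e_1$ and $e_3\ot\bar e_1$ lie in $\delta^{\overline{A}}_A(A)$. But $(e_0-e_2)\cdot\tfrac12(e_1+e_3)=0$ with $0\ne e_0-e_2\in K$, so $\phi\big((e_0-e_2)\ot\bar e_1\big)=0$ and $\phi$ is not injective. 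The identity $\pi\circ r=\varepsilon_A\ot\id_{\overline{A}}$ that you carefully establish guarantees $\gamma$ is a section, but it does nothing to ensure $\gamma(\bar g)$ is a non-zero-divisor.

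The paper sidesteps this by constructing the section differently: it first defines $h$ on $\fK G(\overline{A})$ by choosing for each $\bar g$ a group-like lift in $G(A)$ (which is automatically $H$-colinear by Lemma~\ref{le:gsubcomod}), and then uses the injectivity of $A$ in $({}^H\mathcal{M})^{\overline{A}}$ from part~(1) to extend $h$ to all of $\overline{A}$. This forces $h(\bar g)\in G(A)$ to be invertible, so $k\,h(\bar g)\ne 0$ for $k\ne 0$, giving injectivity of $\varphi$ on simple $\overline{A}$-subcomodules; surjectivity then follows from the injectivity of $K\ot\overline{A}$. Your argument can be repaired by requiring $\gamma_0$ to send each $\bar g\in G(\overline{A})$ to a group-like lift (such a $\gamma_0$ exists, and your $r$-correction then fixes these values since $\gamma_0(\bar g)\ot\bar g=\delta^{\overline{A}}_A(\gamma_0(\bar g))$ already lies in the image of $\delta^{\overline{A}}_A$); but without this restriction the proof as written does not go through.
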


\begin{proof}
(1) Since $A$ and $K$ and thus $A/K$ are Hopf modules in ${^A}({^H}\mathcal{M})_K,$ it follows from Proposition~\ref{pro:freeness} that $A/K$ is $K$-free in ${^H}\mathcal{M}$. Then from Lemma \ref{proj} we know that $A/K$ is projective in $({^H}\mathcal{M})_K$ and thus the natural exact sequence
\begin{center}
\begin{tikzcd} 
  0 \arrow[r] & K \arrow[r] & A \arrow[r]    &   A/K    \arrow[r] & 0 	            
\end{tikzcd}  
\end{center}
splits and $K$ is a direct summand of $A$ as right $K$-module left $H$-comodule. We can decompose 
$$A\ot_K A=(K\oplus A')\ot_K A=K\ot_K A\oplus A'\ot_K A,$$
where $A'$ is a subobject of $A$ in $({^H}\mathcal{M})_K$. Both of the summands are $\overline{A}$-comodules, because the $\overline{A}$-comodule structure is defined on the second tensor factor. Using the isomorphism $\mathrm{can}$ in Lemma \ref{can} it follows that $A\cong K\ot_K A$ is a direct summand of $A\ot \overline{A}$ in the category $({^H}\mathcal{M})^{\overline{A}}$, where $A$ is embedded into $A\otimes \overline{A}$ via
$$ a\mapsto a^{(1)}\ot \overline{a^{(2)}}.
$$
Then from Lemma~\ref{directinj} we conclude that $A$ is an injective object in  $({^H}\mathcal{M})^{\overline{A}}$.

(2) Since $A$ is pointed, the map $G(A)\rightarrow G(\overline{A})$, $g\mapsto \overline{g}$ is surjective (\cite[Proposition~5.4.2]{MR4164719}). Choose a map $\gamma:G(\overline{A})\rightarrow G(A)$ with $\overline{\gamma(\overline{g})}=\overline{g}$ for all $\overline{g}\in G(\overline{A})$. Then the linear map $f:\fK G(\overline{A})\rightarrow A$, $\overline{g}\mapsto \gamma(\overline{g})$ is right $\overline{A}$-colinear and left $H$-colinear, since by assumption $\fK g$ is an $H$-subcomodule for all $g\in G(A)$ and thus $^H\delta(g)=1\ot g$ for all $g\in G(A)$ (see Lemma~\ref{le:gsubcomod}). Since $A$ is an injective object in  $({^H}\mathcal{M})^{\overline{A}}$ by (1), the map $f$ can be extended to a right $\overline{A}$-colinear left $H$-colinear map $h:\overline{A}\rightarrow A$. Define the linear map
\begin{align*}
\varphi: K\otimes \overline{A}\rightarrow A,\quad k\otimes \overline{a}\mapsto kh(\overline{a})
\end{align*}
for all $k\in K$, $\overline{a}\in \overline{A}$. Obviously, the map $\varphi$ is left $K$-linear, right $\overline{A}$-colinear and left $H$-colinear. We want to prove first that $\varphi$ is a monomorphism. Assume, for sake of contradiction, that $\ker\varphi$ is non-trivial. Then it contains a simple $\overline{A}$-subcomodule. Since $\overline{A}$ is pointed, the simple $\overline{A}$-subcomodules are of the form $\fK k\otimes \overline{g}$, where $k\in K$, $k\neq 0$ and $\overline{g} \in G(\overline{A})$. But $$\varphi(k\ot\overline{g})=kh(\overline{g})\neq 0,$$ since $h(\overline{g})$ is a group-like element of $A$ and thus invertible. Hence $\varphi $ is a monomorphism. The right $\overline{A}$-comodule $K\otimes \overline{A}$ is injective (Lemma \ref{directinj}) and thus the right $\overline{A}$-comodule monomorphism $\varphi$ is bijective.
\end{proof}

Note that the assumptions on the braided Hopf algebra $A$ in the last Theorem are fulfilled in particular if $A$ is an $\mathbb{N}_0$-graded connected Hopf algebra. For example the braided Hopf algebras described in Examples~\ref{ex:tensoralg}-\ref{ex:fominkirillov} are connected.  

A direct consequence of the decomposition Theorem \ref{decomp} is the following $\mathbb{N}_0$-graded version. We equip $H$ with the trivial $\mathbb{N}_0$-grading, that is $H(0)=H$ and $H(n)=0$ for all $n>0$. An $\mathbb{N}_0$-graded Hopf algebra in ${^{H}_{H}}\mathcal{YD}$ is a Hopf algebra in ${^{H}_{H}}\mathcal{YD}(\mathbb{N}_0\mathrm{-Gr}\mathcal{M}_{\fK})$. For informations about graded structures and especially about graded Yetter-Drinfeld modules we refer to Chapters 5.1 and 5.5 in \cite{MR4164719}.

\begin{cor}
Let $H$ be a cosemisimple Hopf algebra. Let $A$ be an $\mathbb{N}_0$-graded pointed braided Hopf algebra in ${^{H}_{H}}\mathcal{YD}$ such that $\fK g$ is an $H$-subcomodule and an $H$-submodule for all $g\in G(A)$. Let $K$ be an $\mathbb{N}_0$-graded left coideal subalgebra of $A$ in ${^H}\mathcal{M}$ such that $g^{-1}\in K$ for all $g\in G(A)\cap K$. Let $\overline{A}=A/K^+A$.
Then there is a left $K$-linear right $\overline{A}$-colinear left $H$-colinear and $\mathbb{N}_0$-graded isomorphism $K\ot \overline{A}\rightarrow A$.  
\end{cor}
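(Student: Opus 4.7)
The plan is to show that the entire argument in the proof of Theorem~\ref{decomp} is compatible with the $\mathbb{N}_0$-grading, so that the resulting isomorphism is automatically graded. The key categorical observation is that, with $H$ placed in degree $0$, the category of $\mathbb{N}_0$-graded left $H$-comodules decomposes as a product over degrees of copies of $\clcm H$, and is therefore semisimple. All ingredients of the proof of Theorem~\ref{decomp} that rely on cosemisimplicity of $\clcm H$ thus remain valid in the graded category.

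First, I would verify that $\overline{A}$ inherits the grading. Since $\varepsilon$ is concentrated in degree $0$, $K^+ = K\cap \ker\varepsilon$ is a graded subspace of $K$; consequently $K^+A$ is a graded ideal of $A$, and $\overline{A}=A/K^+A$ is a graded quotient coalgebra in the graded version of $\clcm H$. The canonical map $\pi\colon A\to \overline{A}$ is graded.

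Next, I would reproduce the proof of Theorem~\ref{decomp} in the graded category throughout. All the structural morphisms appearing in the argument are intrinsically graded: multiplication, comultiplication, the maps $\Phi_X$ of Lemma~\ref{can}, the (half) braided diagonal actions and coactions, and the canonical surjections. The semisimplicity-based steps---namely, the splittings used in Proposition~\ref{pro:freeness}, the projectivity of $A/K$, and the injectivity of $A$ given by Lemma~\ref{directinj}---can all be performed in the graded category. Therefore the graded analogue of Theorem~\ref{decomp}(1) yields that $A$ is injective in the graded version of $(\clcm H)^{\overline{A}}$.

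Finally, for part (2), I would choose the section $\gamma: G(\overline{A})\to G(A)$ to respect the grading: a homogeneous lift of a homogeneous group-like of $\overline{A}$ along the graded surjection $\pi$ can be found inside $G(A)$ by pointedness of $A$. Hence the linear map $f:\fK G(\overline{A})\to A$ is graded, and the graded injectivity of $A$ yields a graded right $\overline{A}$-colinear left $H$-colinear extension $h:\overline{A}\to A$. The map $\varphi: K\ot \overline{A}\to A$, $k\ot \overline{a}\mapsto k\,h(\overline{a})$, is then graded in addition to being left $K$-linear, right $\overline{A}$-colinear, and left $H$-colinear; bijectivity follows from the same argument as in the proof of Theorem~\ref{decomp}(2).

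The main obstacle is the bookkeeping around the grading, particularly ensuring that $\gamma$ can be chosen to respect degrees. Once this is verified---which holds in the graded pointed setting of the corollary---the rest is essentially a rerun of the ungraded proof with every construction performed in the graded category.
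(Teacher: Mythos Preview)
Your approach is correct but differs from the paper's. Rather than rerunning the entire proof of Theorem~\ref{decomp} in the graded category and checking each step for compatibility with the grading, the paper encodes the grading as an extra coaction: it extends the $\mathbb{N}_0$-grading to a $\mathbb{Z}$-grading, sets $H'=H\otimes \fK\mathbb{Z}$ with $\fK\mathbb{Z}=\fK[z,z^{-1}]$, and observes that $A$ becomes a pointed braided Hopf algebra in ${}^{H'}_{H'}\mathcal{YD}$ via ${}^{H'}\delta_A(a)=a_{(-1)}\ot z^n\ot a_{(0)}$ for $a\in A(n)$, with $\fK\mathbb{Z}$ acting trivially. Since $H'$ is again cosemisimple, Theorem~\ref{decomp} applies directly as a black box, and an $H'$-colinear map is exactly a graded $H$-colinear map. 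This buys brevity and avoids the bookkeeping you flag (no separate verification of graded splittings, graded injectivity, or a graded choice of $\gamma$ is needed). Conversely, your route makes explicit why everything works---the graded $H$-comodule category is semisimple degree-by-degree---and does not require the passage to $\mathbb{Z}$ and the auxiliary Hopf algebra $H'$; it is essentially the unpacking of the paper's trick. Note, incidentally, that your step of choosing $\gamma$ homogeneously corresponds exactly to the paper's implicit claim that the hypothesis ``$\fK g$ is an $H'$-subcomodule'' transfers, i.e.\ that group-likes are homogeneous; both arguments rely on the same fact at this point.
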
 

\begin{proof}
We extend the $\mathbb{N}_0$-gradings to $\mathbb{Z}$-gradings setting the homogeneous parts equal to zero for each negative integer. Let $H'$ be the Hopf algebra $H\ot\fK\mathbb{Z}$, where $\fK\mathbb{Z}=\Bbbk[z,z^{-1}]$ denotes the group Hopf algebra with group-like generators $z,z^{-1}$. Then $A$ is a left $H'$-comodule by $${^{H'}}\delta_A(a)=a_{(-1)}\ot z^n \ot a_{(0)}$$ for all homogeneous $a\in A(n),$ where ${^H}\delta_A(a)=a_{(-1)} \ot a_{(0)}$. Of course, $A$ is also a left $H'$-module via $${_{H'}}\lambda_A(h\ot x\ot a)={_H}\lambda_A(h\ot a)$$ for all $h\in H$, $x\in\fK\mathbb{Z}$, $a\in A$. Then $A$ (with its initial Hopf algebra structure maps) is a pointed Hopf algebra in ${^{H'}_{H'}}\mathcal{YD}$ and as such it fulfills the assumptions of Theorem \ref{decomp}, since they can be transferred from the assumptions in the present corollary. Since $K$ is a $\mathbb{Z}$-graded left $H$-comodule, it is also a subcomodule of the left $H'$-comodule $A$. Then Theorem \ref{decomp} states that there is a left $K$-linear right $\overline{A}$-colinear left $H'$-colinear isomorphism $K\ot \overline{A}\rightarrow A$ and the assertion follows, since $H'$-colinear maps are exactly the $\mathbb{Z}$-graded $H$-colinear maps.
\end{proof}

\newcommand{\etalchar}[1]{$^{#1}$}
\providecommand{\bysame}{\leavevmode\hbox to3em{\hrulefill}\thinspace}
\providecommand{\MR}{\relax\ifhmode\unskip\space\fi MR }
\providecommand{\MRhref}[2]{%
  \href{http://www.ams.org/mathscinet-getitem?mr=#1}{#2}
}
\providecommand{\href}[2]{#2}


\newcommand{\etalchar}[1]{$^{#1}$}
\providecommand{\bysame}{\leavevmode\hbox to3em{\hrulefill}\thinspace}
\providecommand{\MR}{\relax\ifhmode\unskip\space\fi MR }
\providecommand{\MRhref}[2]{%
  \href{http://www.ams.org/mathscinet-getitem?mr=#1}{#2}
}
\providecommand{\href}[2]{#2}
\begin{thebibliography}{AAGI{\etalchar{+}}14}

\bibitem[AAGI{\etalchar{+}}14]{MR3133699}
Nicol\'{a}s Andruskiewitsch, Iv\'{a}n Angiono, Agust\'{\i}n
  Garc\'{\i}a~Iglesias, Akira Masuoka, and Cristian Vay, \emph{Lifting via
  cocycle deformation}, J. Pure Appl. Algebra \textbf{218} (2014), no.~4,
  684--703. \MR{3133699}

\bibitem[BLM16]{MR3552907}
Jonah Blasiak, Ricky~Ini Liu, and Karola M\'{e}sz\'{a}ros, \emph{Subalgebras of
  the {F}omin-{K}irillov algebra}, J. Algebraic Combin. \textbf{44} (2016),
  no.~3, 785--829. \MR{3552907}

\bibitem[HS13]{MR3096611}
Istv\'{a}n Heckenberger and Hans-J\"{u}rgen Schneider, \emph{Right coideal
  subalgebras of {N}ichols algebras and the {D}uflo order on the {W}eyl
  groupoid}, Israel J. Math. \textbf{197} (2013), no.~1, 139--187. \MR{3096611}

\bibitem[HS20]{MR4164719}
\bysame, \emph{Hopf algebras and root systems}, Mathematical Surveys and
  Monographs, vol. 247, American Mathematical Society, Providence, RI, [2020]
  \copyright 2020. \MR{4164719}

\bibitem[HW21]{MR4209967}
Istv\'{a}n Heckenberger and Kevin Wolf, \emph{Two-cocycles and cleft extensions
  in left braided categories}, J. Algebra Appl. \textbf{20} (2021), no.~1,
  Paper No. 2140013, 21. \MR{4209967}

\bibitem[Kha05]{MR2179722}
V.~K. Kharchenko, \emph{Braided version of {S}hirshov-{W}itt theorem}, J.
  Algebra \textbf{294} (2005), no.~1, 196--225. \MR{2179722}

\bibitem[Kha08]{MR2415067}
\bysame, \emph{P{BW}-bases of coideal subalgebras and a freeness theorem},
  Trans. Amer. Math. Soc. \textbf{360} (2008), no.~10, 5121--5143. \MR{2415067}

\bibitem[ST16]{MR3413681}
Hans-J\"{u}rgen Schneider and Blas Torrecillas, \emph{A braided version of some
  results of {S}kryabin}, Comm. Algebra \textbf{44} (2016), no.~1, 205--217.
  \MR{3413681}

\end{thebibliography}
\end{document}